\newtheorem{theorem}{Theorem}[section]
\newtheorem{prop}[theorem]{Proposition}
\renewenvironment{proof}{\par\noindent{\bf Proof.}}{$\square$\par\bigskip}
\newtheorem{lemma}[theorem]{Lemma}
\newtheorem{cor}[theorem]{Corollary}
\newtheorem{thm}{Theorem}[section]
\newtheorem{question}[thm]{Question}
\def\Z{\mathbb Z}
\def\N{\mathbb N}
\def\R{\mathbb R}
\def\C{\mathbb C}
\def\O{\operatorname{O}}
\def\o{\operatorname{o}}
\def\log{\operatorname{log}}
\def\p{\operatorname{\preccurlyeq}}
\def\thtp{\operatorname{\theta_{\textit{f}}\,(\textit{p})}}
\def\p{\operatorname{\widehat{\phi_{\it L}}} }
\definecolor{blue}{rgb}{.2,.6,.75}
\definecolor{green}{rgb}{.4,.7,.4}
\definecolor{purple}{RGB}{127,0,255}
\begin{document}

\title[Central limit theorems for elliptic curves and modular forms]{Central limit theorems for elliptic curves and modular forms with smooth weight functions}
\author[Stephan Baier]{Stephan Baier}
\date{\today}
\address{Stephan Baier, Ramakrishna Mission Vivekananda University, Department of Mathematics, PO Belur Math, Distt Howrah - 711202, West Bengal, India}
\email{stephanbaier2017@gmail.com}
\author[Neha Prabhu]{Neha Prabhu}
\address{Neha Prabhu, The Institute of Mathematical Sciences, C.I.T. Campus, Taramani, Chennai - 600113, Tamil Nadu, India}
\email{nehap@imsc.res.in}
\author[Kaneenika Sinha]{Kaneenika Sinha}
\address{Kaneenika Sinha, IISER Pune, Dr Homi Bhabha Road, Pashan, Pune - 411008, Maharashtra, India}
\email{kaneenika@iiserpune.ac.in}
\subjclass[2010]{Primary 11F11, 11F25, 11F41, 11G05, 11G40}

\begin{abstract} In \cite{PrabhuSinha}, the second and third-named authors established a Central Limit Theorem for the error term in the 
Sato-Tate law for families of modular forms. This method was adapted to families of 
elliptic curves in \cite{BP19} by the first and second-named authors. In this context, a Central Limit Theorem was established only 
under a strong hypothesis going beyond the Riemann Hypothesis. 
In the present paper, we consider a smoothed version of the Sato-Tate conjecture, which allows us to overcome several limitations. In particular,
for the smoothed version, we are able to establish a Central Limit Theorem for much smaller families of modular forms, and we succeed in 
proving a theorem of this type for families of elliptic curves under the Riemann Hypothesis for $L$-functions associated to Hecke eigenforms 
for the full modular group. 
\end{abstract}

\maketitle

\bigskip

\section{Introduction}
 A sequence $X = \{x_n\}$ of real numbers in $[0,1]$ is said to be equidistributed with respect to a probability measure $\mu$ (or $\mu$-equidistributed) if for all $0 \leq \alpha \leq \beta \leq 1,$
\begin{equation}\label{ed-1}
\lim_{V \to \infty} \frac{1}{V} \#\{ 1 \leq n \leq V:\,\alpha \leq x_n \leq \beta \} = \int_{[\alpha,\beta]}d\mu.
\end{equation}
Equivalently, for any continuous function $\phi:\,[0,1] \to \C,$
\begin{equation}\label{ed-2}
\lim_{V \to \infty} \frac{1}{V} \sum_{n=1}^V \phi(x_n) = \int_{[0,1]} \phi d\mu.
\end{equation}
A pertinent analytic question about a $\mu$-equidistributed sequence is the rate of convergence in \eqref{ed-1} and \eqref{ed-2}.  That is,

\begin{question}\label{discrepancy}
Can we find explicit bounds for the discrepancies
$$D_X(V) := \left| \frac{1}{V} \#\{ 1 \leq n \leq V:\,\alpha \leq x_n \leq \beta \} -\int_{[\alpha,\beta]}d\mu\right|$$
and
$$D_X(\phi,V) := \left| \frac{1}{V}\sum_{n=1}^V \phi(x_n) - \int_{[0,1]} \phi d\mu\right|$$
in terms of $V?$
\end{question}
Other related questions to these discrepancies are stated below.
\begin{question}\label{averages}
By varying the sequences $X$ in a suitable family $\mathcal F$, 
can we obtain average error terms
$$\frac{1}{|\mathcal F|}\sum_{X \in \mathcal F} D_X(V)$$
and
$$\frac{1}{|\mathcal F|}\sum_{X \in \mathcal F} D_X(\phi,V)?$$
Does the order of the discrepancy improve upon averaging?
\end{question}

\begin{question}\label{fluctuations}
How do the terms $D_X(V)$ and $D_X(\phi,V)$ fluctuate as one varies the sequences $X$ in a family $\mathcal F$? In the case of $D_X(\phi,V)$, how are the fluctuations affected by the choice of the functions $\phi$?

\end{question}
While the template for the study of the above stated questions is provided by Fourier analysis, the answers arise out of arithmetic properties of the concerned sequences.  In this article, we focus on two sequences which are equidistributed with respect to the Sato-Tate measure $$\mu(t) := 2\sin^2 \pi t.$$We address Questions \ref{discrepancy}, \ref{averages} and \ref{fluctuations} for these sequences. 

{\bf Sequences arising from elliptic curves:}
Let $E(a,b)$ denote an elliptic curve given by the equation
$$y^2 = x^3 + ax + b,$$
where $a,\,b \in \Z$ and $\Delta(a,b) := 4a^3 + 27b^2 \neq 0.$  

Let $\mathcal N_E$ denote the conductor of the elliptic curve $E$ and $L(E;s)$ denote the Hasse-Weil $L$-function 
$$L(E;s) := \sum_{n=1}^{\infty} \frac{a_E(n)}{n^s} = \prod_{p \mid \mathcal N_E}\left(1 -   \frac{a_E(p)}{p^s}\right)^{-1}\prod_{p \nmid \mathcal N_E}\left(1 -   \frac{a_E(p)}{p^s} + p^{1 - 2s}\right)^{-1},\,\text{ Re }(s) > \frac{3}{2}.$$
We denote 
$$\tilde{a}_E(n) = \frac{a_E(n)}{\sqrt n}.$$
By a classical theorem of Hasse, if $p$ is a prime such that $p \nmid \mathcal N_E,$ we have $\tilde{a}_E(p) \in [-2,2].$  Thus, we write $\tilde{a}_E(p) = 2 \cos \pi \theta_E(p)$ for $\theta_E(p) \in [0,1].$  In the 1960s, Sato and Tate independently conjectured that for an elliptic curve as defined above, the sequence $\{\theta_E(p)\}_{p\text{ prime}}$ is equidistributed in $[0,1]$ with respect to the measure $\mu(t)dt,$ where $\mu(t) = 2 \sin^2 \pi t.$  That is,  for any interval $I = [\alpha,\beta] \subset [0,1],$
\begin{equation}\label{ST-E}
\lim_{x \to \infty} \frac{\#\{p \leq x:\,(p,\mathcal N_E) = 1,\,\theta_E(p) \in I\}}{\pi(x)}  = \int_I\mu(t) dt.
\end{equation}
Here, $\pi(x)$ denotes the number of primes less than or equal to $x.$

The Sato-Tate conjecture is now a theorem by the work of Clozel, Harris, Shepherd-Barron and Taylor (\cite{CHT}, \cite{HST} and \cite{Taylor}).  
Henceforth, for an interval $I = [\alpha,\beta] \subset [0,1],$ define
 $$M_I(E,x) = \#\{p \leq x :\, p\text{ prime},\,(p,\mathcal N_E) = 1,\,\theta_E(p) \in I\}. $$

In the spirit of Question \ref{discrepancy}, what can we say about error terms in the Sato-Tate distribution law?  In this respect, Murty \cite{VKMurty} obtained conditional effective error terms for \eqref{ST-E}.  Under the assumption that all symmetric power $L$-functions of $E(a,b)$  can be analytically continued to $\C,$  have suitable functional equations and satisfy the (Generalized) Riemann Hypothesis, he showed that
\begin{equation}\label{Kumar-Murty}
M_I(E,x) = \pi(x)\int_I \mu(t) dt + \O\left(x^{3/4}\sqrt{\log \mathcal N_Ex }\right).
\end{equation}
The Riemann Hypothesis for the symmetric power $L$-functions of $L(E(a,b);s)$ plays a vital role in the study of the error term $M_I(E(a,b),x) - \pi(x)\int_I \mu(t) dt$ and currently, no estimates are available  without this assumption.  

With respect to Question \ref{averages}, one can investigate the error terms in $M_I(E,x)$ by averaging over suitable families of elliptic curves.  One may consider an associated quantity,
$$N_I(E(a,b),x) = \#\{x/2 < p \leq x:\,p\text{ prime},\,(p,\mathcal N_E) = 1,\,\theta_E(p) \in I\}$$ (The restriction of $p$ to dyadic intervals turns out useful in our later treatment.)
In this direction, the first-named author of this article and Zhao (\cite[Theorem 1]{BZ}) have obtained results from which estimates for the average of the error term
$$\frac{1}{4AB}\sum_{|a| \leq A}\sum_{|b| \leq B}\left(N_I(E(a,b),x) - \tilde{\pi}(x) \mu(I)\right)$$
follow if $A,B$ and the length of the interval $I$ satisfy suitable conditions. Here,
$$\tilde{\pi}(x) := \#\{x/2 < p \leq x,\,p \text { prime }\} \text{ and }\mu(I) := \int_I\mu(t)dt. $$
\cite{BZ} contains estimates for the second moment
$$\frac{1}{4AB}\sum_{|a| \leq A}\sum_{|b| \leq B}\left(N_I(E(a,b),x) - \tilde{\pi}(x) \mu(I)\right)^2.$$
The above average results do not require the assumptions made in \cite{VKMurty}.  We now describe another (related) sequence which follows the Sato-Tate distribution law.

{\bf Sequences arising from modular forms:}
The Sato-Tate conjecture was generalized by Serre in the context of modular forms.  Indeed, for a positive, even integer $k$ and a positive integer $N,$ let $S(N,k)$ denote the space of modular cusp forms of weight $k$ with respect to $\Gamma_0(N).$  For $n \geq 1,$ let $T_n$ denote the $n$-th Hecke operator acting on $S(N,k).$  We denote the set of Hecke newforms  in $S(N,k)$  by $\mathcal F_{N,k}.$  Any $f(z) \in \mathcal F_{N,k}$ has a Fourier expansion
$$f(z) = \sum_{n=1}^{\infty} {n^{\frac{k-1}{2}}}a_f(n) q^n, \qquad q = e^{2\pi i z},$$
where 
$a_f(1) = 1$ and
$$\frac{T_n(f(z))}{n^{\frac{k-1}{2}}} = a_f(n) f(z),\,n \geq 1.$$
We consider a newform $f(z)$ in $\mathcal F_{N,k}.$  Let $p$ be a prime number with $(p,N) = 1.$  By a theorem of Deligne, the eigenvalues $a_f(p)$ lie in the interval $[-2,2].$  Denoting $a_f(p) = 2\cos \pi \thtp,\,$ with $ \thtp \in [0,1],$ the Sato-Tate conjecture for modular forms is the assertion that if $f$ is a non-CM  newform in $\mathcal F_{N,k},$ the sequence $\{\thtp\}_{p \text{ prime}}$ is equidistributed in the interval $[0,1]$ with respect to the Sato-Tate measure $\mu(t) dt.$  That is, for any interval $I \subset [0,1],$
\begin{equation}\label{ST-M}
\lim_{x \to \infty}\frac{1}{\pi(x)}\#\{p \leq x :\, (p,N) = 1,\,\thtp \in I \} = \int_I \mu(t) dt.
\end{equation}  
By the modularity theorem of Wiles \cite{Wiles}, for an elliptic curve $E = E(a,b)$ as defined above, there exists a newform $f \in \mathcal F_{\mathcal N_E,2}$ with rational Fourier coefficients such that $L(E;s)$ is equal to the $L$-function of $f,$
$$L_f(s) := \sum_{n=1}^{\infty} \frac{n^{1/2}a_f(n)}{n^s},\,\text{ Re }(s) > \frac{3}{2}.$$     Thus, the Sato-Tate conjecture \eqref{ST-M} for modular forms is a generalization of the Sato-Tate conjecture \eqref{ST-E} for elliptic curves.   This (general) conjecture is now a theorem by the work of Barnet-Lamb, Geraghty, Harris and Taylor \cite{BGHT}. 

Henceforth, for an interval $I = [\alpha,\beta] \subset [0,1],$ define, for $f \in \mathcal F_{N,k},$
 
$$N_I(f,x) = \#\{p \leq x :\, p\text{ prime},\,(p,N) = 1,\,\thtp \in I\}.$$
Recently, under the assumption of similar analytic hypotheses for symmetric power $L$-functions for $f \in \mathcal F_{N,k}$ as those made in \cite{VKMurty},  Rouse and Thorner \cite{RT} sharpened and generalized the error term in Murty's result \eqref{Kumar-Murty} to all even $k \geq 2.$  They showed that under the aforementioned analytic hypotheses, for all $k \geq 2$ and squarefree $N,$ 
$$N_I(f,x) =  \pi(x)\int_I \mu(t) dt + \O\left(x^{3/4}\right)$$
for any $f \in \mathcal F_{N,k}.$
By the work of Wang \cite{Wang}, we now have the following unconditional average estimate for the error terms of $N_I(f,x)\,f \in \mathcal F_{1,k}.$
$$\frac{1}{|\mathcal F_{1,k}|}\sum_{f \in \mathcal F_{1,k}}\left(N_I(f,x) - \pi(x) \mu(I)\right) = \O\left(\log \log x + \frac{\log x}{k}\right).$$

We now make some remarks about Question \ref{fluctuations} with respect to both types of sequences described above.

In \cite{PrabhuSinha}, the second and third-named authors of this article considered the following perspective. 
One may view $\mathcal F_{N,k}$ as a finite measure space with the uniform distribution of mass $1/|\mathcal F_{N,k}|$ to each element $f$ in the space.  
Now let the random variable $X_k:\,\mathcal F_{N,k} \to \Z_{\geq 0}$ be given by $f \to N_I(f,x).$  We can then evaluate the asymptotic behaviour of expected value of this 
random variable, the variance and higher moments of suitable normalizations of this random variable.   More precisely, \cite{PrabhuSinha} contains the following theorem.

\begin{thm} \label{moment}
Let $N \geq 1.$  Suppose $k = k(x)$ satisfies $\frac{\log k}{\sqrt x \log x} \to \infty$ as $x \to \infty.$  Then,
for any integer $r \geq 1,$
$$\lim_{x \to \infty}\frac{1}{|\mathcal F_{N,k}|}\sum_{f \in \mathcal F_{N,k}}\left(\frac{\left(N_I(f,x) - \pi(x) \mu(I)\right)}{\sqrt{\pi(x)\left(\mu(I) - \mu(I)^2\right)}}\right)^r = 
\begin{cases}
0 &\text{ if }r\text{ is odd}\\
\frac{r!}{\left(\frac{r}{2}\right)!2^{r/2}}&\text{ if }r\text{ is even.}
\end{cases}$$
\end{thm}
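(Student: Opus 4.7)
The plan is to use the method of moments: compute the $r$-th moment of the normalized random variable and show it converges to the $r$-th moment of a standard normal. Since the Gaussian is determined by its moments, this would yield a full central limit theorem; the stated assertion is the moment computation itself. The central idea is to expand the indicator $\mathbf{1}_I$ in the basis of Chebyshev polynomials $U_m(\cos\pi\theta) = \sin((m+1)\pi\theta)/\sin(\pi\theta)$, orthonormal for $L^2([0,1],\mu)$,
\[
\mathbf{1}_I(\theta) - \mu(I) \;=\; \sum_{m \geq 1} \hat{g}_I(m)\, U_m(\cos\pi\theta).
\]
Combined with the Hecke relation $a_f(p^m) = U_m(\cos\pi\theta_f(p))$, this rewrites
\[
N_I(f,x) - \pi(x)\mu(I) \;=\; \sum_{\substack{p \leq x \\ (p,N)=1}}\;\sum_{m \geq 1} \hat{g}_I(m)\, a_f(p^m).
\]
Since $\mathbf{1}_I$ is discontinuous, in practice I would first sandwich it between Beurling--Selberg trigonometric polynomials of degree $M = M(x)$, reducing to the truncated range $m \leq M$ and controlling the residual error.

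\textbf{Moment expansion.} Raising to the $r$-th power and swapping sums, the moment becomes
\[
\sum_{\mathbf{p},\mathbf{m}} \prod_{i=1}^r \hat{g}_I(m_i) \cdot \frac{1}{|\mathcal{F}_{N,k}|}\sum_f \prod_{i=1}^r a_f\bigl(p_i^{m_i}\bigr).
\]
Applying Hecke multiplicativity (across distinct primes) together with the product-to-sum identity $U_m U_{m'} = U_{m+m'} + U_{m+m'-2} + \cdots + U_{|m-m'|}$ (for equal primes), the inner product expands as a $\Z_{\geq 0}$-linear combination of $a_f(n)$'s. The newform version of the Eichler--Selberg trace formula then gives $|\mathcal{F}_{N,k}|^{-1}\sum_f a_f(n) = \delta_{n=1} + O_r\bigl(k^{-\delta} n^{O(1)}\bigr)$. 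Translated back through $\int U_m\, d\mu = \delta_{m,0}$, the main term reduces to a purely combinatorial count of ``perfect pairings'' of the $r$ indices $\{1,\ldots,r\}$, where $i,j$ may be paired only when $p_i = p_j$ and $m_i = m_j$.

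\textbf{Extracting the Gaussian moments.} For odd $r$, no perfect pairing exists, so the main term is identically zero. For $r = 2j$, there are $(2j)!/(j!\,2^j)$ perfect pairings, and each contributes
\[
\left(\sum_{p\leq x}\sum_{m \geq 1}|\hat{g}_I(m)|^2\right)^{\!j} \;=\; \bigl(\pi(x)(\mu(I)-\mu(I)^2)\bigr)^j + \text{lower order},
\]
by Parseval applied to $\mathbf{1}_I-\mu(I)$. Configurations with index-coincidences of multiplicity $\geq 3$ save at least one factor of $\pi(x)$ in the combinatorial count and hence are of lower order after normalization. Dividing by $\bigl(\pi(x)(\mu(I)-\mu(I)^2)\bigr)^{r/2}$ then produces exactly the asserted Gaussian moments.

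\textbf{Main obstacle.} The delicate part is controlling the trace-formula error. The $r$-fold sum contains roughly $(\pi(x)\,M)^r$ terms, each contributing an error of size $O(k^{-\delta}\prod_i p_i^{O(m_i)})$. For the cumulative error to be $o(\pi(x)^{r/2})$, one needs $k$ to dominate any fixed polynomial in $\pi(x)$, which is precisely the content of the hypothesis $\log k / (\sqrt{x} \log x) \to \infty$. In parallel, the Beurling--Selberg degree $M$ must be calibrated carefully: taking $M$ too small leaves too large an approximation error in $\mathbf{1}_I$, while taking $M$ too large inflates the number of trace-formula error terms. Balancing these two sources of error against the main term of order $\pi(x)^{r/2}$ is the principal technical difficulty.
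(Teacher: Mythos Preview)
Your proposal is essentially correct and matches the approach of \cite{PrabhuSinha}, which the present paper cites for this theorem rather than reproving it. The paper explicitly describes that method: approximate $\chi_I$ by Beurling--Selberg polynomials of degree $M$, expand via Chebyshev polynomials and Hecke relations, apply the Eichler--Selberg trace formula, and extract the Gaussian main term combinatorially. The paper's own Section~4 carries out the analogous argument for smooth weights (Theorem~\ref{main-modular}), and remarks that Theorem~\ref{moment} is recovered by specializing $\Phi$ to Beurling's function.

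One point in your write-up is inaccurate and worth correcting, because it obscures exactly why the growth hypothesis has the shape it does. You write that the cumulative trace-formula error requires ``$k$ to dominate any fixed polynomial in $\pi(x)$,'' and identify this with $\log k/(\sqrt{x}\log x)\to\infty$. That is a mismatch: polynomial domination would only need $\log k/\log x\to\infty$ (which is the much weaker hypothesis appearing in Theorem~\ref{main-modular}(a) for compactly supported $\widehat{\Phi}$). The stronger condition $\log k \gg \sqrt{x}\log x$ arises because the Beurling--Selberg truncation error for the \emph{characteristic function} is $O(\pi(x)/M)$, forcing $M\gg \sqrt{\pi(x)}\asymp (x/\log x)^{1/2}$; then the integers $n=\prod p_i^{m_i}$ entering the trace formula can be as large as $x^{crM}$, so the error bound $n/k$ requires $\log k \gg M\log x \asymp \sqrt{x\log x}$. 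The hypothesis in the theorem is (up to harmless factors) precisely this threshold, and the whole point of the present paper is that replacing $\chi_I$ by a smooth $\phi_L$ lets one take $M$ bounded (or logarithmic), collapsing the required growth on $k$ down to polynomial.
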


Since
$$\int_{-\infty}^{\infty} t^r e^{\frac{-t^2}{2}}dt = \begin{cases}
0 &\text{ if }r\text{ is odd}\\
\frac{r!}{\left(\frac{r}{2}\right)!2^{r/2}}&\text{ if }r\text{ is even,}
\end{cases}$$
the above theorem implies that the $r$-th moments of the normalized error
$$\frac{\left(N_I(f,x) - \pi(x) \mu(I)\right)}{\sqrt{\pi(x)\left(\mu(I) - \mu(I)^2\right)}}$$
converge to those of the Gaussian distribution as $x \to \infty$ under the growth conditions for the weights $k=k(x)$ in Theorem \ref{moment}
above. 
Therefore, Theorem \ref{moment} can be viewed as a version of the Central Limit Theorem for the said normalized error.

Adapting the methods of \cite{PrabhuSinha} to families of elliptic curves, the first and second-named authors \cite{BP19} proved an elliptic curve analogue of the above 
theorem under a very strong hypothesis going beyond the Riemann Hypothesis. Unconditionally and under the Riemann Hypothesis for 
$L$-functions associated to 
Hecke eigenforms (which we denote as ``MRH'', meaning ``Modular Riemann Hypothesis''), 
they managed to obtain new upper bounds for the moments in question but failed
to establish asymptotic estimates. However, under MRH, 
their upper bounds differ from the expected sizes of
the moments only by powers of logarithm of $x$. 
In \cite{BP19}, the connection to Hecke eigenforms comes from work of Birch \cite{Birch}, who expressed moments of the coefficients $a_E(p)$ in terms of 
traces of Hecke operators for the full modular group.  After a long chain of calculations, the problem boils down to averaging these traces of Hecke operators
over primes, which in turn can be reduced to averaging Fourier coefficients of individual Hecke eigenforms over primes. These averages are then
estimated by the prime number theorem for Hecke eigenforms. This is the point where MRH comes into play: We obtain significantly better bounds 
(essentially square root cancellation) under the Riemann Hypothesis for the corresponding $L$-functions than unconditionally. This saves us a 
large amount when it comes to estimating the quantity in Lemma \ref{Save}.  

The main idea that goes into the proof of Theorem \ref{moment} (and the corresponding result for elliptic curves) 
is to approximate the characteristic functions $\chi_I(t)$ by smooth trigonometric polynomials 
$S(\theta) = \sum_{|n| \leq M} \widehat{S}(n) e(n\theta)$ defined by Beurling and Selberg \cite[see Chapter 1]{Mont} and therefore, approximate 
$$N_I(f,x) = \sum_{p \leq x} \chi_{I}(\thtp)$$
by the corresponding sums
$$\sum_{|n| \leq M} \widehat{S}(n)\sum_{  p \leq x} e(n \theta_f(p))$$ over the relevant families.  
The approximation of the characteristic function by these trigonometric polynomials gives rise to (essentially) two dominant error terms involving the parameter $M$. The process of choosing an optimal value for $M$ that balances these two error terms is what results in the growth condition of the weights $k$ with respect to $x$ in the case of modular forms. The case of elliptic curves is more complicated and the optimal choice of $M$ varies with the hypothesis assumed (unconditional, on MRH and on conditions stronger than MRH). The resulting combined error in the unconditional case and on assuming MRH however, exceeds the expected main term. Thus a central limit theorem is not obtained, even on assuming MRH.

In this article, we consider two types of smooth, periodic test functions $\phi$ and treat the associated sums
$$\sum_{p \leq x} \phi(\thtp),\,f \in \mathcal F_{N,k} \text{ instead of }\sum_{p \leq x} \chi_I(\thtp).$$
These smooth test functions are constructed in such a way that their Fourier series have rapid decay or are even finite. 
Therefore, cutting them off at a suitable, not too large $M$ will result in an error term that is negligible or even vanishes. This is 
a big advantage over taking the characteristic function of an interval and results in error terms that are easier to handle. In particular,
we manage to obtain a full analogue of Theorem \ref{moment} for elliptic curves under 
MRH and a strong improvement of Theorem \ref{moment} for modular forms with a much 
weaker growth condition on the weights $k=k(x)$. We now state the two main theorems of this article.

\subsection{New theorems with smooth weight functions}
\begin{thm}\label{main-elliptic}
 Let $\Phi \in C^{\infty}(\R)$ be a real-valued, even function in the Schwartz class and $\widehat{\Phi}$ its Fourier transform. Suppose that
 $\Phi(t) \ll (1+|t|)^{-2}$ and, for some fixed $\lambda,\omega>0$,    
 $\widehat{\Phi}(t) \ll e^{-\lambda |t|^{\omega}}$, as $|t| \to \infty$.  Fix a real number
 $L\ge 1$ and define 
$$\phi_L(t) = \sum_{m \in \Z} \Phi(L(t+m)) \text{ and }N_{\Phi,L,E}(x) = \sum_{\substack{x/2 < p \leq x \\ p \nmid \mathcal{N}_E}} \phi_L(\theta_E(p)).$$
Define
$$V_{\Phi,L} = \int_0^1\phi_L(t)^2\mu(t)dt - \left( \int_0^1 \phi_L(t) \mu(t) dt \right)^2.$$
%\frac{1}{L^2} \sum_{m \geq 1}\left(\widehat{\Phi}\left(\frac{m}{L}\right) - \widehat{\Phi}\left(\frac{m+1}{L}\right)\right)^2.$$  
Suppose $A = A(x) \geq 1,\,B = B(x) \geq 1$ satisfy $\frac{\log A}{\log x},\frac{\log B}{\log x}\rightarrow \infty$, as $x\rightarrow \infty$, 
and $\log(2AB)\le x^{1/2-\varepsilon}$. 
If the Riemann Hypothesis holds for all $L$-functions associated to Hecke eigenforms with respect to the full modular group, then
for any integer $r \geq 0,$
\begin{equation} \label{ellmoment}
\lim_{x \to \infty}\frac{1}{4AB}\sum_{|a| \leq A}\sum_{|b| \leq B} \left(\frac{N_{\Phi,L,E(a,b)}(x) - 
\tilde{\pi}(x)\int_0^1\phi_L(t) \mu(t)dt}{\sqrt{\tilde{\pi}(x)V_{\Phi,L}}}\right)^r = 
\begin{cases}
0 &\text{ if }r\text{ is odd}\\
\frac{r!}{\left(\frac{r}{2}\right)!2^{r/2}}&\text{ if }r\text{ is even.}
\end{cases}
\end{equation}
\end{thm}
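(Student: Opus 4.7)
The overall strategy is to Fourier-expand $\phi_L$, reduce the $r$-th moment of the normalized error to averages of products of Chebyshev polynomials in $\tilde{a}_{E(a,b)}(p)$, and then invoke a quantified version of Birch's theorem to express those averages in terms of traces of Hecke operators, which are finally estimated via the prime number theorem for Hecke eigenforms under MRH. Since $\Phi$ is real, even and Schwartz and $\phi_L$ is its $L$-scaled periodization, Poisson summation gives
$$\phi_L(\theta) = \tfrac{1}{L}\widehat{\Phi}(0) + \tfrac{2}{L}\sum_{n\ge 1}\widehat{\Phi}(n/L)\cos(2\pi n\theta).$$
The hypothesis $\widehat{\Phi}(t)\ll e^{-\lambda|t|^{\omega}}$ allows me to truncate at $|n|\le M$, where $M$ is a sufficiently large power of $\log x$, with an exponentially small error. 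Writing $\cos(2\pi n\theta_E(p)) = T_{2n}(\tilde{a}_E(p)/2)$ with $T_m$ the Chebyshev polynomial of the first kind, the truncated $\phi_L(\theta_{E(a,b)}(p)) - \int_0^1 \phi_L\mu$ becomes a polynomial in $\tilde{a}_{E(a,b)}(p)$ of degree $\le 2M$ whose coefficients inherit the rapid decay of $\widehat{\Phi}$.

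Expanding the $r$-th power of $\sum_{x/2<p\le x}(\phi_L(\theta_{E(a,b)}(p))-\int\phi_L\mu)$ yields a sum over $r$-tuples $(p_1,\ldots,p_r)$ of products $\prod_i P(\tilde{a}_{E(a,b)}(p_i))$, which I average over $(a,b)$ using a version of Birch's theorem with explicit error terms analogous to those in \cite{BZ} and \cite{BP19}. Under the growth conditions $\log A/\log x,\log B/\log x\to\infty$ and $\log(2AB)\le x^{1/2-\varepsilon}$, this approximates the $(a,b)$-average of $\prod_i \tilde{a}_{E(a,b)}(p_i)^{m_i}$ by a product of traces of Hecke operators $T_{p_i^{m_i}}$ on $S_k(\SL_2(\Z))$ (for appropriate even weights $k$), up to a power-saving error in $p_i$. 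Those tuples in which some prime appears an odd number of times contribute, via the spectral decomposition of the trace, prime sums of the form $\sum_{x/2<p\le x}a_f(p^m)$ of Hecke eigenvalues, which under MRH satisfy essentially $\ll x^{1/2}(\log kx)^2$ by the prime number theorem for Hecke eigenforms; after normalization by $(\tilde{\pi}(x)V_{\Phi,L})^{r/2}$ these off-diagonal contributions are $o(1)$.

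The surviving diagonal contributions come from $r$-tuples matching into $r/2$ equal pairs (forcing $r$ to be even), each pair contributing $V_{\Phi,L}\,\tilde{\pi}(x)(1+o(1))$ via the second-moment version of Birch, since the $(a,b)$-average of $\phi_L(\theta_{E(a,b)}(p))^2$ tends to $\int_0^1\phi_L^2\mu$ while the mean tends to $\int_0^1\phi_L\mu$. Counting the $r!/((r/2)!\,2^{r/2})$ pairings of $r$ indices into $r/2$ unordered pairs then yields the claimed Gaussian moment after normalization by $(\tilde{\pi}(x)V_{\Phi,L})^{r/2}$. The principal obstacle is balancing the truncation parameter $M$: enlarging $M$ raises the degrees of the Chebyshev polynomials, hence the weights of the cusp forms entering Birch's formula, which degrades both the Birch error and the MRH prime sum bound, whereas shrinking $M$ enlarges the truncation error. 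The rapid decay $\widehat{\Phi}(t)\ll e^{-\lambda|t|^\omega}$ is precisely what permits a choice $M=(\log x)^K$ in which all three sources of error are simultaneously $o(\tilde{\pi}(x)^{r/2})$; this is the essential gain over the Beurling--Selberg approach in \cite{BP19}, whose coefficients decay only polynomially and forced $M$ to be a small power of $x$, preventing an MRH-only CLT.
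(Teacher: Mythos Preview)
Your proposal is essentially correct and follows the same approach as the paper: Fourier-expand $\phi_L$, truncate at $M\asymp L(\log x)^{1/\omega+\varepsilon}$ using the sub-exponential decay of $\widehat{\Phi}$, expand the $r$-th power, reduce via a quantified Birch-type average over $(a,b)$ to products of local quantities $S(p^{\alpha})$ that are controlled by MRH-prime-sum bounds, and extract the Gaussian main term from the fully paired diagonal. The paper organizes the moment expansion through the coefficients $C(\alpha_1,\ldots,\alpha_u)$ and the combinatorial Lemma on the numbers $D(m_1,\ldots,m_r;m)$, so the off-diagonal dichotomy is phrased as ``some $\alpha_i\neq 0$'' rather than your ``some prime appears an odd number of times''; these are not quite equivalent (paired primes with mismatched frequencies also produce $\alpha_i\neq 0$ terms requiring MRH), but the remaining cases are lower order for the combinatorial reasons you note, so your sketch captures the argument correctly.
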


\begin{thm}\label{main-modular}
Let $\Phi \in C^{\infty}(\R)$ be a real-valued, even function in the Schwartz class.  
Let $\phi_L(t)$ and $V_{\Phi,L}$ be as defined in Theorem \ref{main-elliptic}.  For $f \in \mathcal F_{N,k},$ define
$$N_{\Phi,L,f}(x) = \sum_{p \leq x \atop {p \nmid N}} \phi_L(\thtp).$$
\begin{enumerate}
\item[{\bf(a)}] Suppose $\widehat{\Phi}$ is compactly supported and $k = k(x) \geq 2$ satisfies $\frac{\log k}{\log x} \to \infty$ as $x \to \infty.$  Then,
for any integer $r \geq 0,$
\begin{equation}\label{Gaussian}
\lim_{x \to \infty}\frac{1}{|\mathcal F_{N,k}|}\sum_{f \in \mathcal F_{N,k}} \left(\frac{N_{\Phi,L,f}(x) - {\pi}(x)\int_0^1 \phi_L(t) \mu(t)dt}{\sqrt{{\pi}(x)V_{\Phi,L}}}\right)^r = 
\begin{cases}
0 &\text{ if }r\text{ is odd}\\
\frac{r!}{\left(\frac{r}{2}\right)!2^{r/2}}&\text{ if }r\text{ is even.}
\end{cases}
\end{equation}

\item[{\bf(b)}] For fixed $\lambda, \omega>0$, suppose the Fourier transform $\widehat{\Phi}$ satisfies $\widehat{\Phi}(t) \ll e^{-\lambda |t|^{\omega}}$, as $|t| \to \infty.$  Then, the asymptotic \eqref{Gaussian} holds if $k = k(x) \geq 2$ satisfies $\frac{(\log k)}{(\log x)^{1+1/\omega}} \to \infty$ as $x \to \infty.$
  
\end{enumerate}
\end{thm}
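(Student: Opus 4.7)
The plan is to follow the moment-method framework of Theorem \ref{moment} from \cite{PrabhuSinha}, but to replace the Beurling-Selberg trigonometric approximation of the characteristic function $\chi_I$ by the intrinsic Fourier expansion of $\phi_L$. Since $\widehat{\Phi}$ decays much more rapidly than the Fourier coefficients of $\chi_I$, the Fourier truncation parameter no longer needs to be a positive power of $x$; it can be taken bounded (in part (a)) or merely polylogarithmic in $x$ (in part (b)), which is exactly what will let us relax the growth condition on $k=k(x)$.

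The first step is to apply Poisson summation to the periodic function $\phi_L$, which gives
\[
\phi_L(t) = \frac{1}{L}\sum_{n\in\Z}\widehat{\Phi}(n/L)\,e(nt), \qquad N_{\Phi,L,f}(x) = \frac{1}{L}\sum_{n\in\Z}\widehat{\Phi}(n/L)\sum_{\substack{p\leq x \\ p\nmid N}} e(n\theta_f(p)).
\]
I would then truncate this sum at $|n|\leq M$, where in part (a) the value $M=\lceil LC\rceil$, with $C$ bounding the support of $\widehat{\Phi}$, makes the truncation exact, and in part (b) $M\asymp L(\log x)^{1/\omega}$ ensures that the tail $\ll \pi(x)\sum_{|n|>M}e^{-\lambda(n/L)^{\omega}}$ is negligible compared to $\sqrt{\pi(x)V_{\Phi,L}}$. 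Using the identity $\int_0^1 e(nt)\mu(t)\,dt=\delta_{n,0}-\tfrac{1}{2}(\delta_{n,1}+\delta_{n,-1})$, the centered quantity $D_f(x):=N_{\Phi,L,f}(x)-\pi(x)\int_0^1\phi_L(t)\mu(t)\,dt$ rewrites as a short weighted sum of the deviations $\widetilde{A}_f(n,x):=\sum_{p\leq x,\,p\nmid N} e(n\theta_f(p))-\pi(x)\int_0^1 e(n\theta)\mu(\theta)\,d\theta$.

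The moment computation then proceeds in the spirit of \cite{PrabhuSinha}: expanding the $r$-th power multilinearly and exchanging summations reduces everything to averages of the form
\[
\frac{1}{|\mathcal F_{N,k}|}\sum_{f\in\mathcal F_{N,k}} \prod_{j=1}^r e(n_j\theta_f(p_j)),
\]
which are handled via the Eichler-Selberg trace formula (equivalently, by expressing moments of $a_f(p)$ as traces of Hecke operators on $S(N,k)$). As $k\to\infty$ these $f$-averages converge to integrals against the local Plancherel measure at each $p_j$, with an error bounded by a negative power of $k$. The dominant contribution in the expansion comes from Wick-type pairings, in which the primes $p_1,\ldots,p_r$ are paired and the corresponding Fourier indices satisfy $n_i+n_{i'}=0$, and the resulting combinatorics produces exactly the Gaussian moments $r!/((r/2)!\,2^{r/2})$, with the correct normalization $V_{\Phi,L}^{r/2}$ emerging from a Parseval-type identity for $\phi_L$ against $\mu$.

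The main obstacle is balancing the various error terms. The number of Fourier tuples $(n_1,\ldots,n_r)$ entering the expansion is at most $M^r$, and each contributes an averaging error coming from the trace formula remainder that decays in a negative power of $k$. For the total error to remain negligible against the main term of size $x^{r/2}$, one is led to the stated growth conditions on $k(x)$: in part (a), $M$ is bounded and only $\log k/\log x\to\infty$ is needed, whereas in part (b) the inflation of the index range to roughly $(\log x)^{1/\omega}$ costs an additional factor per variable and forces $\log k\gg(\log x)^{1+1/\omega}$. The bulk of the technical work will consist in tracking this balance carefully and verifying that the Wick pairings indeed reproduce the variance $V_{\Phi,L}$ as defined in Theorem \ref{main-elliptic}.
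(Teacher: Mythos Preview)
Your approach is essentially the paper's: Fourier-expand $\phi_L$, truncate at the same $M$ (bounded in (a), $\asymp L(\log x)^{1/\omega}$ in (b)), expand the $r$-th moment, apply the Eichler--Selberg trace formula, and extract Gaussian moments from the diagonal pairings, with the variance identified via a Parseval computation. The paper phrases this through Chebyshev polynomials (writing $2\cos 2\pi m\theta_f(p)=a_f(p^{2m})-a_f(p^{2m-2})$ and setting $Z_M(t)=\sum_{m\le M}U(m)X_{2m}(2\cos\pi t)$, following a simplification of \cite{SWZ}), while you work directly with $e(n\theta_f(p))$; these are equivalent.

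One point needs sharpening, and as stated it is a genuine gap in case (b). You attribute the error to ``$M^r$ Fourier tuples'' times a trace-formula remainder ``that decays in a negative power of $k$''. But the Eichler--Selberg remainder for $\frac{1}{|\mathcal F_{N,k}|}\sum_f a_f(n)$ is of size $n\sigma_0(n)/k$, not $1/k$: it grows with the argument $n$. When $n=p_1^{2m_1}\cdots p_u^{2m_u}$ with $p_i\le x$ and $\sum m_i\le Mr$, this factor is as large as $x^{2Mr}$, and the paper's total error is $\displaystyle O_r\!\left(\frac{4^{\nu(N)}}{k\sqrt N}\,M^{4r}x^{2Mr}\pi(x)^r\right)$. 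It is the $x^{2Mr}/k$ factor, not the Fourier-tuple count, that drives the growth condition on $k$. In case (a), $M$ is bounded, so $x^{2Mr}$ is a fixed power of $x$ and $\log k/\log x\to\infty$ suffices. In case (b), $M\asymp L(\log x)^{1/\omega}$ gives $\log(x^{2Mr})\asymp(\log x)^{1+1/\omega}$, which is exactly what forces the stronger hypothesis $\log k/(\log x)^{1+1/\omega}\to\infty$. If you only tracked $M^r/k$, case (b) would appear to need no more than case (a), which is wrong. Make sure your write-up records the $n$-dependence of the trace-formula error explicitly.
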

$ $\\
$ $\\
{\bf Remark 1:} It makes perfect sense to exclude the cases when $a=0$ or $b=0$ from the moments in \eqref{ellmoment} because those curves all have CM (recall that the Sato-Tate law is valid
only under the condition that $E(a,b)$ is non-CM). The proportion of the remaining CM-curves is small, so there is no loss in keeping them (for details, see \cite{FoMu}). \\  \\
{\bf Remark 2:} In the case of elliptic curves, it turns out that the assumption of MRH is essential in our method. Unconditionally, we have
not been able to make significant progress on the problem. \\ \\
{\bf Remark 3:} The periodic test functions $\phi_L(t)$ considered by us are centered at the integers and become very small if $||t||$, 
the distance of $t$ to the nearest integer, is much larger than $1/L$. Hence, $\phi_L(t)$, considered modulo 1, may be viewed as a smooth 
analogue of the 
characteristic function of the interval $[-1/L,1/L]$. It is possible to extend our method to functions centered at shifts $c+\mathbb{Z}$ of the
integers. That is, we can consider functions of the form
$$\phi_{L,c}(t) := \sum_{n \in \Z} \Phi(L(t+n +c)),\,0 < c < 1.$$
However, this comes at the cost of much more tedious calculations. For clarity and simplicity, we therefore 
confine ourselves to test functions $\phi_L(t)$ centered at the integers. 

\subsection*{Organization of the article} 
This article is organized as follows.  In Section \ref{FourierPrelim}, we use Fourier analysis to interpret Theorems \ref{main-elliptic} and \ref{main-modular} in terms of moments of appropriate trigonometric sums.  Sections \ref{Moments-ellcurv} and \ref{Moments-modform} contain the proofs of Theorems \ref{main-elliptic} and \ref{main-modular} respectively.  Sections \ref{Moments-ellcurv} and \ref{Moments-modform} are self contained and can be read independently of each other.

\subsection*{Acknowledgements} 
This work was initiated through discussions at IISER Pune.  The first and second named authors thank IISER Pune for its hospitality. The authors also thank the anonymous referee for suggestions that improved the exposition of this article.

\section{Preliminaries from Fourier analysis}\label{FourierPrelim}
In this section, we make a note about the different kinds of smooth functions considered in this article.
Let $\Phi \in C^{\infty}(\R)$ be an even, real-valued function.
  As in Theorems \ref{main-elliptic} and \ref{main-modular}, for $L \geq 1,$ let
$$\phi_L(t) := \sum_{m \in \Z}\Phi(L(t + m)).$$ 
Then, $\phi_L(t)$ is a periodic function with Fourier expansion
$$\phi_L(t) = \frac{1}{L}\sum_{m \in \Z}\widehat{\Phi}\left(\frac{m}{L}\right)e(mt).$$
Here, $e(x)$ denotes $e^{2\pi i x}.$
%Henceforth, in this article, depending on the context, any sum over primes $p \leq x$ will denote sums over primes $p$ which are coprime to the level $N$ of the newform $f \in \mathcal F_{N,k}$ or conductor $\mathcal N_E$ of the elliptic curve $E$ under consideration.
For a positive integer $N,$ we define
$$\pi_N(x) := \#\{p \leq x:\,p \text{ prime },\,p \nmid N\}.$$ 
Since $\widehat{\Phi}$ is an even function,
\begin{align}
N_{\Phi,L,f}(x) &:= \sum_{p \leq x \atop { p \nmid N}} \phi_L(\thtp) 
= \frac{1}{L}\sum_{m \in \Z}\widehat{\Phi}\left(\frac{m}{L}\right)\sum_{p \leq x \atop { p \nmid N}}e(m\thtp)\nonumber\\
&= \frac{1}{L}\widehat{\Phi}(0)\pi_N(x) + \frac{1}{L}\sum_{m \geq 1}\widehat{\Phi}\left(\frac{m}{L}\right)\sum_{p \leq x \atop { p \nmid N}}2 \cos 2\pi m\thtp.\nonumber
\end{align}
We now recall the following classical result (see for example, \cite[Lemma 1]{Serre}) that encodes recursive relations between $a_f(p^m),\,m \geq 1.$ 
\begin{lemma}%\label{mult}
For a prime $p$ and an integer $m \geq 1,$ 
\begin{equation*}\label{mult}
2 \cos 2 \pi m \thtp = a_f(p^{2m}) - a_f(p^{2m-2}).
\end{equation*}
\end{lemma}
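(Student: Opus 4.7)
The plan is to combine the Satake parametrization of $a_f(p)$ with Hecke multiplicativity and then finish with a one-line trigonometric identity. Throughout, write $\theta = \theta_f(p)$ and set $\alpha = e^{i\pi\theta}$, so that $\alpha^{-1} = e^{-i\pi\theta}$ and $a_f(p) = \alpha + \alpha^{-1} = 2\cos(\pi\theta)$.

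The first step is to establish the closed form
$$a_f(p^n) = \frac{\alpha^{n+1} - \alpha^{-(n+1)}}{\alpha - \alpha^{-1}} = \frac{\sin((n+1)\pi\theta)}{\sin(\pi\theta)}, \qquad n \ge 0.$$
This follows by a straightforward induction on $n$ using the Hecke recursion $a_f(p^n) = a_f(p)\,a_f(p^{n-1}) - a_f(p^{n-2})$ (valid for $p \nmid N$ and $n \ge 2$), with base cases $a_f(1)=1$ and $a_f(p) = \alpha + \alpha^{-1}$. Equivalently, $a_f(p^n)$ is the Chebyshev polynomial $U_n$ of the second kind evaluated at $\cos(\pi\theta)$.

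The second step is a one-line trigonometric computation. Specializing the closed form to $n = 2m$ and $n = 2m-2$ gives
$$a_f(p^{2m}) - a_f(p^{2m-2}) = \frac{\sin((2m+1)\pi\theta) - \sin((2m-1)\pi\theta)}{\sin(\pi\theta)},$$
and applying the sum-to-product identity $\sin A - \sin B = 2\cos\frac{A+B}{2}\sin\frac{A-B}{2}$ with $A=(2m+1)\pi\theta$ and $B=(2m-1)\pi\theta$ collapses the numerator to $2\cos(2\pi m\theta)\sin(\pi\theta)$. The factor $\sin(\pi\theta)$ cancels, leaving $2\cos(2\pi m\,\theta_f(p))$, as claimed.

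The only mild subtlety is the degenerate case $\theta \in \{0,1\}$, where $\sin(\pi\theta)=0$ makes the closed-form expression indeterminate. This is harmless: one passes to the evident limit, so that $a_f(p^n) = (\pm 1)^n(n+1)$ and therefore $a_f(p^{2m}) - a_f(p^{2m-2}) = 2 = 2\cos(2\pi m\theta)$ for $\theta \in \{0,1\}$. I do not foresee this as a real obstacle; indeed, the identity is classical, and one may alternatively just invoke the reference \cite[Lemma 1]{Serre} cited by the authors.
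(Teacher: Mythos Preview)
Your argument is correct; the paper itself gives no proof of this lemma at all, merely recalling it as a classical fact with a reference to \cite[Lemma 1]{Serre}. So there is nothing to compare against: you have supplied a clean self-contained proof (Satake/Chebyshev closed form plus a sum-to-product identity) where the authors chose to cite. Your handling of the degenerate case $\theta\in\{0,1\}$ is also fine and, as you note, could equally be replaced by the citation.
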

Thus, we deduce
\begin{align*}\label{Nphi}
N_{\Phi,L,f}(x) &= \frac{1}{L}\widehat{\Phi}\left(0\right)\pi_N(x) + \frac{1}{L}\sum_{m \geq 1}\widehat{\Phi}\left(\frac{m}{L}\right)\sum_{p \leq x \atop { p \nmid N}} a_f(p^{2m}) - a_f(p^{2m-2})\nonumber\\
&=\frac{1}{L}\left(\widehat{\Phi}\left(0\right) - \widehat{\Phi}\left(\frac{1}{L}\right)\right)\pi_N(x) + \frac{1}{L}\sum_{m \geq 1} \left(\widehat{\Phi}\left(\frac{m}{L}\right) -  \widehat{\Phi}\left(\frac{m+1}{L}\right)\right) \sum_{p \leq x \atop{p \nmid N}}a_f(p^{2m}).
\end{align*}

Henceforth, for each $m,$ let us denote $\widehat{\phi_L}(m)$ as the $m$-th Fourier coefficient of the period 1 function $\phi_L(t).$  We have
$$\widehat{\phi_L}(m) = \frac{1}{L} \widehat{\Phi}\left(\frac{m}{L}\right).$$  Since $\widehat{\Phi}$ is even,
\begin{equation}\label{expected-value}
\frac{1}{L}\left(\widehat{\Phi}\left(0\right) - \widehat{\Phi}\left(\frac{1}{L}\right)\right) = \widehat{\phi_L}(0) -  \widehat{\phi_L}(1)  = \int_0^1 \phi_L(t)(1 - \cos 2\pi t) dt =  \int_0^1 \phi_L(t)\mu(t) dt.
\end{equation}
Define \begin{equation}\label{U(m)-defn}
U(m):= \frac{1}{L}\left(\widehat{\Phi}\left(\frac{m}{L}\right) -  \widehat{\Phi}\left(\frac{m+1}{L}\right)\right) \text{ for every } m \geq 1.
\end{equation}  We have
\begin{align}\label{subtracted mean}
N_{\Phi,L,f}(x) - \pi_N(x)  \int_0^1 \phi_L(t)\mu(t) dt = \sum_{m \geq 1} {U}(m)\sum_{  p \leq x \atop { p \nmid N}}a_f(p^{2m}).
\end{align}
If $x>6$, which we may assume without loss of generality, then following an identical calculation in the case of elliptic curves, we obtain
\begin{align}\label{subtracted mean-2}
N_{\Phi,L,E(a,b)}(x) - \tilde{\pi}(x)  \int_0^1 \phi_L(t)\mu(t) dt =  
\sum_{m \geq 1} {U}(m)\sum\limits_{\substack{x/2<p\le x \\ p\nmid \Delta(a,b)}} \tilde{a}_{E(a,b)}\left(p^{2m}\right),
\end{align}
where $\Delta(a,b)=4a^3+27b^2$ is the discriminant of $E(a,b)$. Here we note that the prime divisors exceeding 3 of the conductor and 
discriminant coincide. Since we consider primes $p$ in dyadic intervals $(x/2,x]$ and assume $x>6$, the primes $p=2,3$ are automatically 
excluded. 

\subsection{Integral representation of the variance}
As will be evident in the later sections, the second moment in both settings of elliptic curves and modular forms will be proportional to 
$$\sum_{m \geq 1} {U}(m)^2 = \frac{1}{L^2} \sum_{m \geq 1}\left(\widehat{\Phi}\left(\frac{m}{L}\right) - \widehat{\Phi}\left(\frac{m+1}{L}\right)\right)^2.$$ It is useful to know that this quantity has an integral representation that involves the measure $\mu$. We prove 
\begin{prop}\label{variance-exact}
	We have \begin{equation*}
\sum_{m\geq 1} {U}(m)^2 =\int_0^1\phi_L(t)^2\mu(t)dt - \left( \int_0^1 \phi_L(t) \mu(t) dt \right)^2.
	\end{equation*}
\end{prop}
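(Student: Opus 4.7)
The plan is to verify the identity by Fourier analysis on the circle, exploiting that $U(m) = \widehat{\phi_L}(m) - \widehat{\phi_L}(m+1)$ by \eqref{U(m)-defn}. Write $c_m := \widehat{\phi_L}(m)$ for brevity; since $\Phi$ is real and even, each $c_m$ is real and $c_{-m} = c_m$, while Schwartz decay of $\Phi$ gives $c_m \ll_N (1+|m|)^{-N}$ for every $N$, so every series appearing below converges absolutely and term-by-term manipulations are legitimate.

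First I would expand the left-hand side directly:
$$\sum_{m\geq 1} U(m)^2 = \sum_{m\geq 1}\bigl(c_m - c_{m+1}\bigr)^2 = 2\sum_{m\geq 1} c_m^2 - c_1^2 - 2\sum_{m\geq 1} c_m c_{m+1}.$$

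Second, I would compute the right-hand side using Parseval's identity together with $\mu(t) = 1 - \cos(2\pi t)$. Parseval gives $\int_0^1 \phi_L(t)^2\, dt = \sum_{m\in\Z} c_m^2 = c_0^2 + 2\sum_{m\geq 1} c_m^2$. For the cosine-weighted piece, I would expand $\phi_L^2 = \sum_n d_n e(nt)$ as a convolution $d_n = \sum_k c_k c_{n-k}$ (absolutely convergent) and pick off
$$\int_0^1 \phi_L(t)^2 \cos(2\pi t)\, dt = \tfrac{1}{2}(d_1 + d_{-1}) = \sum_{k\in\Z} c_k c_{k+1} = 2 c_0 c_1 + 2\sum_{m\geq 1} c_m c_{m+1},$$
where the last step uses $c_{-k} = c_k$ to fold the $k \leq -1$ tail onto the $k \geq 0$ half.

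Finally, \eqref{expected-value} records that $\int_0^1 \phi_L(t)\mu(t)\,dt = c_0 - c_1$, so its square is $c_0^2 - 2 c_0 c_1 + c_1^2$. Combining the previous two displays and subtracting produces exactly $2\sum_{m\geq 1} c_m^2 - c_1^2 - 2\sum_{m\geq 1} c_m c_{m+1}$, matching the expansion in step one. There is no genuine analytic obstacle here; the only care required is in the algebraic bookkeeping when symmetrising $\sum_{k\in\Z} c_k c_{k+1}$ into a one-sided sum beginning at $m=1$, where the isolated term $c_0 c_1$ (from $k=0$ and $k=-1$) and the boundary term $-c_1^2$ must be tracked carefully.
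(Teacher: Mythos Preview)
Your proof is correct and follows essentially the same approach as the paper: both expand $U(m)^2$ in terms of $c_m = \widehat{\phi_L}(m)$, use the evenness $c_{-m}=c_m$ to relate one-sided and two-sided sums, apply Parseval together with $\mu(t)=1-\cos 2\pi t$, and invoke \eqref{expected-value} for the squared term. The only cosmetic difference is that the paper first rewrites the one-sided sum as $\sum_{m\in\Z} c_m^2 - \sum_{m\in\Z} c_m c_{m+1} - (c_0-c_1)^2$ and then applies Parseval, whereas you compute each side separately in terms of one-sided sums and match; the underlying identities are the same.
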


\begin{proof}
Recall that for every $m \in \Z,$ the $m$-th Fourier coefficient of $\phi_L(t)$ satisfies
$$\widehat{\phi_L}(m) = \frac{1}{L}\widehat{\Phi}\left(\frac{m}{L}\right).$$
	We have 
	\begin{equation*}
	\begin{split}
	\sum_{m \geq 1}{U}(m)^2 &= \sum_{m\geq 1} (\widehat{\phi_L}(m)  - \widehat{\phi_L}(m+1)  )^2\\
	&= \sum_{m\geq 1} (\widehat{\phi_L}(m)^2  -2\p(m)\p(m+1) + \widehat{\phi_L}(m+1)^2).
	%(\overline{\widehat{\phi_L}(n)}  -\overline{ \widehat{\phi_L}(n+1)} )
	%	&= \sum_{n=1}^{\infty}( \p(n)^2 + \p(-n)^2 +\p(n+1)^2 +\p(-(n+1))^2\\
	%	& \quad + 2\sum_{n=1}^{\infty} (\p(n)\p(-n) + \p(n+1)\p(-(n+1)))\\
	%& \quad  -2\sum_{n=1}^{\infty} (\p(n)\p(n+1) + \p(-n)\p(-(n+1)) + \p(n)\p(-(n+1)) +\p(-n)\p(n+1)).
	\end{split}
	\end{equation*}
	Since $\widehat{\Phi}$ is an even function, we know that $\widehat{\phi_L}(m) = \widehat{\phi_L}(-m).$  Hence,
	\begin{align*}
	\sum_{m \in\Z} \p(m)^2 &= 2\sum_{m\geq 1}\p(m)^2 +\p(0)^2,\\
	\sum_{m \in\Z} \p(m+1)^2 &= 2\sum_{m\geq 1}\p(m+1)^2 +\p(0)^2 + 2\p(1)^2,\\
	2\sum_{m \in \Z} \p(m)\p(m+1) &= 4\sum_{m \geq 1}\p(m)\p(m+1) +4\p(0)\p(1).
	\end{align*} Therefore,
	\begin{align*}
	\sum_{m \geq 1}{U}(m)^2 &= \sum_{m \in\Z} \p(m)^2 - \sum_{m \in \Z} \p(m)\p(m+1) - \left( \p(0) -\p(1)\right)^2.
	\end{align*}
	For the first two terms, we use  Parseval's identity:  If  
	$$f(x)= \sum_{n \in \Z}\widehat{f}(n)e^{2\pi i n x}, \qquad g(x)=\sum_{n \in \Z}\widehat{g}(n)e^{2\pi i n x},$$ 
	then \begin{equation*}
	\sum_{n\in \Z}\widehat{f}(n)\overline{\widehat{g}(n)} = \int_0^1f(t)\overline{g(t)}dt.
	\end{equation*}
	Using Parseval's identity and equation \eqref{expected-value},
	we now obtain 
	\begin{align*}
	\sum_{m \geq 1} {U}(m)^2 &= \int_0^1 \phi_L(t)^2dt  - \int_0^1 \phi_L(t)\phi_L(t)\cos(2\pi t)dt -  \left( \int_0^1 \phi_L(t) (1-\cos(2\pi t))dt \right)^2 \\
	& = \int_0^1\phi_L(t)^2\mu(t)dt - \left( \int_0^1 \phi_L(t) \mu(t) dt \right)^2.
	\end{align*}
	
\end{proof}
{\bf Note:} If we do not assume that $\phi_L$ is even, then we get $$\sum_{m \geq 1}{U}(m)^2 =  \frac{1}{2}\int_0^1\phi_L(t)^2\mu(t)dt + \frac{1}{2}\int_0^1\phi_L(t)\phi_L(-t)\mu(t)dt - \left( \int_0^1 \phi_L(t) \mu(t) dt \right)^2.$$

%\subsection{Remarks on proofs of Theorems \ref{main-elliptic} and \ref{main-modular}}\label{proof-sketch} 
We will see later that our main theorems will be proved by showing that for each $r\in \N$,  
$$ \frac{1}{4AB} \sum\limits_{|a|\le A} \sum\limits_{|b|\le B}  \left(\sum_{m \geq 1} {U}(m)\sum\limits_{\substack{x/2<p\le x \\ p\nmid \Delta(a,b)}} \tilde{a}_{E(a,b)}\left(p^{2m}\right)\right)^r$$
%$$\frac{1}{|\mathcal F_{N,k}|}\sum_{f \in \mathcal F_{N,k}}\left(N_{\Phi,L,f}(x) -\pi(x)\int_0^1\phi_L(t)\mu(t)dt\right)^r =$$
and
$$\frac{1}{|\mathcal F_{N,k}|}\sum_{f \in \mathcal F_{N,k}} \left( \sum_{m \geq 1}{U}(m)\sum_{  p \leq x \atop { p \nmid N}}a_f(p^{2m})\right)^r$$ 
%$$\frac{1}{4AB} \sum\limits_{|a|\le A} \sum\limits_{|b|\le B} \left(N_{\Phi,L,E(a,b)}(x) - \tilde{\pi}(x)  \int_0^1 \phi_L(t)\mu(t) dt \right)^r $$
 converge to the moments of the Gaussian distribution under the respective hypotheses in each case.

\section{Computation of moments: elliptic curves} \label{Moments-ellcurv}
In this section, we prove Theorem \ref{main-elliptic} closely proceeding along the lines in \cite{BP19}. 
We shall write ``MRH'' for ``Modular Riemann Hypothesis'', by 
which we mean that the Riemann Hypothesis holds for all $L$-functions associated to Hecke eigenforms with respect to the full modular group. 
The truth of MRH is 
assumed in Theorem \ref{main-elliptic}. All
$\O$-constants in this section may depend on the parameters $\lambda$ and $\omega$.  
Throughout this section, we assume that $x>6$, and the parameter $\varepsilon>0$ is arbitrarily small and may change from line to line.  

Let $A,B,L\ge 1$. In Theorem \ref{main-elliptic}, we fix $L$. For the time being, though, until we reach Corollary \ref{Etappfin} below,
we treat $L$ as a variable as well. As mentioned in the previous section, our goal is to compute the moments
\begin{equation} \label{Erdef}
E_r:=\frac{1}{4AB} \sum\limits_{0<|a|\le A} \sum\limits_{0<|b|\le B} 
\left( N_{\Phi,L,E(a,b)}(x)- \tilde{\pi}(x) \int\limits_{0}^1 \phi_L(t) \mu(t) \ dt\right)^r.
\end{equation}
More precisely, here we want to isolate a main term in our evaluation of $E_r$ under MRH, which we have not been able to do in \cite{BP19} when
we considered the characteristic function of an interval in place of $\phi_L$. (In \cite{BP19}, we achieved this only under some 
stronger hypotheses.) Only if we are 
able to isolate a main term can a Central Limit Theorem for the error in the Sato-Tate law be established. We first prove the following
Lemmas which will be needed later.

\begin{lemma} \label{Umbound}
We have 
$$
U(m)= \O\left(\min\left\{L^{-2},L^{-1}e^{-\lambda(m/L)^{\omega}}\right\}\right),
$$
where the implied constant depends only on $\Phi$.
\end{lemma}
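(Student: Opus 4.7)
The plan is to prove the two bounds in the minimum separately, then combine.

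For the first bound $U(m) \ll L^{-2}$, I would apply the mean value theorem to $\widehat{\Phi}$ on the interval $[m/L, (m+1)/L]$. This gives some $\xi$ in that interval with
$$\widehat{\Phi}\!\left(\frac{m}{L}\right) - \widehat{\Phi}\!\left(\frac{m+1}{L}\right) = -\frac{1}{L}\widehat{\Phi}'(\xi),$$
so $U(m) = -\widehat{\Phi}'(\xi)/L^2$. Since $\Phi$ is in the Schwartz class, so is $\widehat{\Phi}$, and in particular $\widehat{\Phi}'$ is uniformly bounded on $\mathbb{R}$ by a constant depending only on $\Phi$. This yields $|U(m)| \ll L^{-2}$.

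For the second bound $U(m) \ll L^{-1} e^{-\lambda(m/L)^{\omega}}$, I would simply use the triangle inequality:
$$|U(m)| \le \frac{1}{L}\left|\widehat{\Phi}\!\left(\frac{m}{L}\right)\right| + \frac{1}{L}\left|\widehat{\Phi}\!\left(\frac{m+1}{L}\right)\right|.$$
The hypothesis $\widehat{\Phi}(t) \ll e^{-\lambda|t|^{\omega}}$ as $|t|\to \infty$, combined with continuity of $\widehat{\Phi}$ on compact sets, gives a global bound $|\widehat{\Phi}(t)| \ll e^{-\lambda|t|^{\omega}}$ for all $t \in \mathbb{R}$ (adjusting the implied constant). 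Since $m \ge 1$ and the function $t \mapsto e^{-\lambda t^{\omega}}$ is decreasing for $t>0$, both terms are $\ll e^{-\lambda(m/L)^{\omega}}$, and we conclude $|U(m)| \ll L^{-1} e^{-\lambda(m/L)^{\omega}}$.

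Taking the minimum of the two estimates gives the lemma. There is no real obstacle here; the only subtlety worth noting is that the first bound relies on Schwartz decay of $\widehat{\Phi}'$ (equivalently, smoothness and rapid decay of $t\Phi(t)$), which is automatic from $\Phi \in \mathcal{S}(\mathbb{R})$, so the pointwise decay hypothesis $\Phi(t)\ll (1+|t|)^{-2}$ is not actually needed for this particular lemma.
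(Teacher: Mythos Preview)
Your proof is correct and follows essentially the same approach as the paper: the mean value theorem applied to $\widehat{\Phi}$ gives the $L^{-2}$ bound via $\sup |\widehat{\Phi}'|$, and the triangle inequality together with the decay hypothesis on $\widehat{\Phi}$ gives the exponential bound. Your write-up is in fact slightly more careful than the paper's (you justify extending the asymptotic bound on $\widehat{\Phi}$ to a global one and note the monotonicity of $e^{-\lambda t^{\omega}}$), and your closing observation that the hypothesis $\Phi(t)\ll (1+|t|)^{-2}$ plays no role here is correct.
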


\begin{proof}
The upper bound by the second term in the minimum is a direct consequence of the definition of $U(m)$ in \eqref{U(m)-defn} and the bound  
$\Phi(t)\ll e^{-\lambda|t|^{\omega}}$. Moreover, 
using \eqref{U(m)-defn} and the mean value theorem from calculus, we find that
$$
|U(m)| \le \frac{1}{L^2} \cdot \sup\limits_{t\in \mathbb{R}} |\widehat{\Phi}'(t)|,
$$
which completes the proof.
\end{proof}

\begin{lemma} \label{Umsum}
Set 
\begin{equation} \label{Mdef}
M:= \lceil L(\log x)^{1/\omega+\varepsilon} \rceil
\end{equation}
and assume that $L\le x^{672}$. Then we have 
\begin{equation} \label{square}
\sum\limits_{m\ge 1} U(m)^2 \asymp L^{-3} \asymp \sum\limits_{1\le m\le M} U(m)^2  \quad \mbox{and} \quad \sum\limits_{m\ge 1} U(m)^2=
\sum\limits_{1\le m\le M} U(m)^2 +O\left(x^{-2019}\right) 
\end{equation}
and
\begin{equation} \label{modulus} 
\sum\limits_{m\ge M} |U(m)| = O\left(x^{-2019}\right).
\end{equation}
\end{lemma}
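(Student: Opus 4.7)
The plan is to reduce the lemma to two independent pieces: exponentially small tail bounds coming from Lemma \ref{Umbound}, and the two-sided estimate $\sum_{m\ge 1}U(m)^2 \asymp L^{-3}$ which rests on a mean value theorem and a Riemann sum argument.

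For the tail bounds in \eqref{modulus} and the error term in \eqref{square}, I would apply the second bound in Lemma \ref{Umbound}, namely $|U(m)| \ll L^{-1}e^{-\lambda(m/L)^\omega}$. An integral comparison yields
$$\sum_{m \ge M} e^{-\lambda(m/L)^\omega} \ll L \int_{(M-1)/L}^\infty e^{-\lambda u^\omega}\,du \ll L\, e^{-c(\log x)^{1+\omega\varepsilon}}$$
for some $c>0$ (after absorbing polynomial factors into the exponential, using $M/L \ge (\log x)^{1/\omega+\varepsilon}$). This decays faster than any power of $x$; combined with $L \le x^{672}$ it comfortably produces $O(x^{-2019})$. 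The same argument with $2\lambda$ in place of $\lambda$ bounds the tail of $\sum U(m)^2$.

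For $\sum_{m\ge 1}U(m)^2 \asymp L^{-3}$, I would apply the mean value theorem to $\widehat{\Phi}$, giving
$$U(m) = -\frac{1}{L^2}\widehat{\Phi}'(\eta_m),\qquad \eta_m\in\left(\frac{m}{L},\frac{m+1}{L}\right),$$
so that $\sum_{m\ge 1}U(m)^2 = L^{-4}\sum_{m\ge 1}\widehat{\Phi}'(\eta_m)^2$. The upper bound $\ll L$ on the inner sum follows from the Schwartz decay $|\widehat{\Phi}'(t)|\ll (1+|t|)^{-2}$ by comparison with $\sum_{m\ge 1}(1+m/L)^{-4}$. For the lower bound I would view $L^{-1}\sum_{m\ge 1}\widehat{\Phi}'(\eta_m)^2$ as a Riemann sum of step $1/L$ for $\int_0^\infty \widehat{\Phi}'(t)^2\,dt$. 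Plancherel's identity yields
$$\int_{-\infty}^\infty \widehat{\Phi}'(t)^2\,dt = 4\pi^2 \int_\R t^2\Phi(t)^2\,dt > 0$$
(strict positivity coming from $\Phi\not\equiv 0$), and by evenness the half-line integral is still positive. Controlling the Riemann sum error via the $L^1$ integrability of the derivative of $(\widehat{\Phi}')^2$ (a Schwartz function) gives $\sum_{m\ge 1}\widehat{\Phi}'(\eta_m)^2 \gg L$ uniformly in $L \ge L_0$ for some absolute $L_0$; for the compact range $1\le L \le L_0$ a routine continuity and positivity argument completes the bound.

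Combining these, the truncated estimate $\sum_{1\le m\le M}U(m)^2\asymp L^{-3}$ in \eqref{square} is immediate since the tail $O(x^{-2019})$ is negligible compared to $L^{-3} \ge x^{-2016}$ under $L \le x^{672}$. The main obstacle I foresee is keeping the lower bound constant uniform across all $L \ge 1$: the Riemann sum approach handles large $L$ cleanly, but the small $L$ range has to be treated separately, using that $\phi_L$ is not $\mu$-almost everywhere constant, that is, nonvanishing properties of $\widehat{\Phi}$ at the scale $1/L$.
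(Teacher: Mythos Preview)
Your tail estimates via Lemma~\ref{Umbound} and integral comparison are essentially what the paper does (the paper uses Bernoulli's inequality instead of an integral, but the substance is identical).

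Your route to $\sum_{m\ge 1}U(m)^2\asymp L^{-3}$ is genuinely different. The paper invokes Proposition~\ref{variance-exact}, which identifies $\sum_{m\ge 1}U(m)^2$ with $\int_0^1\phi_L^2\mu-\bigl(\int_0^1\phi_L\mu\bigr)^2$, and then estimates these integrals directly on the $t$-side: after isolating the central copies $\Phi(Lt)$ and $\Phi(L(t-1))$ in the periodisation, the first integral is shown to be $\asymp L^{-3}$ via $\sin^2(\pi t)\sim\pi^2t^2$ near $0$, and the square of the second is $O(L^{-4})$. Your argument stays on the Fourier side: mean value theorem gives $U(m)=-L^{-2}\widehat\Phi'(\eta_m)$, and a Riemann-sum comparison with $\int_0^\infty\widehat\Phi'(t)^2\,dt=2\pi^2\int_{\mathbb R}t^2\Phi(t)^2\,dt>0$ yields the two-sided bound. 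Both approaches are correct and of comparable length; the paper's has the minor advantage of tying the constant directly to $V_{\Phi,L}$, while yours is slightly more self-contained in that it does not appeal to Proposition~\ref{variance-exact}.

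Your closing remark is honest: the Riemann-sum lower bound is only uniform for $L\ge L_0(\Phi)$, and the ``continuity and positivity'' patch for $1\le L\le L_0$ is not automatic, since $\sum_{m\ge 1}U(m)^2$ can in principle vanish for isolated $L$ (e.g.\ if $\widehat\Phi$ happens to vanish at every $m/L$). The paper's asymptotic $\int_0^1\Phi(Lt)^2\mu(t)\,dt\sim cL^{-3}$ as $L\to\infty$ has exactly the same feature. In the intended application $L$ is ultimately fixed with $V_{\Phi,L}>0$, so this is a non-issue, but it is worth noting that neither argument delivers a lower bound uniform over \emph{all} $L\ge 1$ without an extra nondegeneracy hypothesis.
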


\begin{proof} We begin with proving that
\begin{equation} \label{begin}
\sum\limits_{m\ge 1} U(m)^2 \asymp L^{-3}.
\end{equation}
Recall Proposition \ref{variance-exact}, in which the sum of $U(m)^2$ was expressed in terms of integrals involving $\phi_L(t)$.
From 
$$
\phi_L(t) = \sum_{m \in \Z} \Phi(L(t+m)) \quad \mbox{and} \quad \Phi(t) \ll (1+|t|)^{-2},
$$
we deduce that 
$$
\phi_L(t)=\Phi(L(t-1))+\Phi(Lt)+ \O\left(\sum\limits_{m\ge 1} (1+Lm)^{-2}\right)
$$
if $0\le t\le 1$. Since
$$
\sum\limits_{m\ge 1} (1+mL)^{-2} \ll \int\limits_1^{\infty} (1+Lx)^{-2} dx\ll L^{-2},
$$
it follows that 
\begin{equation*}
\begin{split}
\int\limits_{0}^{1} \Phi(Lt)^2\mu(t)dt + \O\left(L^{-4}\right) \le & \int_0^1\phi_L(t)^2\mu(t)dt \\ 
\le & 
2\int\limits_{0}^{1} \left(\Phi(L(t-1))^2+\Phi(Lt)^2\right)\mu(t)dt +\O\left(L^{-4}\right)\\
= & 4\int\limits_{0}^{1} \Phi(Lt)^2\mu(t)dt + \O\left(L^{-4}\right),
\end{split}
\end{equation*}
where we recall that $\Phi$ is even. 
Furthermore, 
$$
\int\limits_{0}^{1} \Phi(Lt)^2\mu(t)dt = \frac{1}{L} \int\limits_0^L \Phi(y) \cdot 2\sin^2 \frac{\pi y}{L} dy \sim \frac{2\pi^2}{L^3} 
\int\limits_{0}^{\infty} \Phi(y)y^2dy
$$
as $L\rightarrow\infty$. 
We deduce that
$$
\int_0^1\phi_L(t)^2\mu(t)dt \asymp L^{-3}.
$$
Similarly, we find
$$
\left(\int_0^1\phi_L(t)\mu(t)dt\right)^2 = \O\left(L^{-4}\right).
$$
Now \eqref{begin} follows from Proposition \ref{variance-exact}.

Next we turn to proving \eqref{modulus}. Using Lemma \ref{Umbound} together with Bernoulli's inequality, we have
\begin{equation*}
\begin{split}
\sum\limits_{m\ge M} |U(m)| \ll & L^{-1}\sum\limits_{m\ge M} e^{-\lambda(m/L)^{\omega}}\\ = &
L^{-1}\sum\limits_{m\ge 0} e^{-\lambda (M/L)^{\omega}(1+m/M)^{\omega}}\\ \le &
L^{-1}\sum\limits_{m\ge 0} e^{-\lambda (M/L)^{\omega}(1+m\omega/M)}\\
= & L^{-1}e^{-\lambda(M/L)^{\omega}}\left(1-e^{-\lambda(M/L)^{\omega}\omega/M}\right)^{-1}\\
\le & L^{-1}e^{-\lambda(M/L)^{\omega}}\cdot \frac{M}{\lambda(M/L)^{\omega}\omega}.
\end{split}
\end{equation*}
By definition of $M$ in \eqref{Mdef}, it follows that
$$
\sum\limits_{m\ge M} |U(m)| \ll  (\lambda\omega)^{-1}(\log x)^{(1/\omega+\varepsilon)(1-\omega)}x^{-\lambda(\log x)^{\omega\varepsilon}} \ll
x^{-2019}
$$
and hence, \eqref{modulus} is established. 

It remains to show that
$$
\sum\limits_{m\ge 1} U(m)^2=\sum\limits_{1\le m\le M} U(m)^2 +O\left(x^{-2019}\right).
$$
From this, \eqref{begin} and our condition $L\le x^{672}$, it then follows that
$$
\sum\limits_{1\le m\le M} U(m)^2 \asymp L^{-3}
$$
and hence \eqref{square} is established. Indeed, using Lemma \ref{Umbound} and \eqref{modulus}, we have
$$
\sum\limits_{m>M} U(m)^2\ll \sum\limits_{m> M} |U(m)|\ll x^{-2019}, 
$$
which completes the proof.
\end{proof}

To be consistent with \cite{BP19}, we write \eqref{subtracted mean-2} in the form 
\begin{equation*}
N_{\Phi,L,E(a,b)}(x)- \tilde{\pi}(x) \int\limits_{0}^1 \phi_L(t) \mu(t) \ dt = 
\sum\limits_{m\ge 1} \tilde{U}(m) \sum\limits_{\substack{x/2<p\le x \\ p\nmid \Delta(a,b)}}
\tilde{a}_{E(a,b)}\left(p^{m}\right), 
\end{equation*}
where 
$$
\tilde{U}(m):=\begin{cases} U(m/2) & \mbox{if } m \mbox{ is even,}\\ 0 & \mbox{otherwise.} \end{cases}
$$
We define $M$ as in \eqref{Mdef} and assume $L\le x^{672}$ throughout this section.
Using \eqref{modulus} and $\tilde{a}_{E(a,b)}\left(p^m\right)= \O(1)$, we may cutoff the sum over $m$ at $2M$, at the cost of a 
negligible error, obtaining
\begin{equation*} \label{neg}
\begin{split}
N_{\Phi,L,E(a,b)}(x)- \tilde{\pi}(x) \int\limits_{0}^1 \phi_L(t) \mu(t) \ dt = &
\sum\limits_{1\le m\le 2M} \tilde{U}(m) \sum\limits_{\substack{x/2<p\le x \\ p\nmid \Delta(a,b)}}
\tilde{a}_{E(a,b)}\left(p^m\right) +  \O_{\varepsilon}\left(x^{-2018}\right)\\
= & \O_{\varepsilon}\left(x^{1+\varepsilon}L^{-1}\right),
\end{split}
\end{equation*}
where the second line arises from $\tilde{U}(m)\ll 1/L^2$ (by Lemma \ref{Umbound}) and the definition of $M$ in \eqref{Mdef}. 
Further,
as in \cite[section12]{BP19}, we separate the contribution of primes $p$ dividing $ab\not=0$. Using $\tilde{a}_{E(a,b)}\left(p^m\right)= \O(1)$,
$\omega(ab)=\O(\log(2ab))$ and again $\tilde{U}(m)\ll 1/L^2$ and \eqref{Mdef}, this can be done at the cost of a small error as well, namely, we obtain 
\begin{equation*} \label{rempab}
\begin{split}
\sum\limits_{1\le m\le 2M} \tilde{U}(m)  \sum\limits_{\substack{x/2<p\le x \\ p\nmid \Delta(a,b)}} 
\tilde{a}_{E(a,b)}\left(p^m\right) = & \sum\limits_{1\le m\le 2M} \tilde{U}(m) \sum\limits_{\substack{x/2<p\le x \\ p\nmid ab\Delta(a,b)}}
\tilde{a}_{E(a,b)}\left(p^m\right)\\ & + \O\left(\log(2|ab|)x^{\varepsilon}L^{-1}\right). 
\end{split}
\end{equation*}
Set 
\begin{equation*} 
\mathcal{L}:=\log(2AB).
\end{equation*}
Recall the definition of $E_r$ in \eqref{Erdef}. It follows that
\begin{equation*}
\begin{split}
E_r= & \frac{1}{4AB}  \sum\limits_{|a|\le A} \sum\limits_{|b|\le B} \Bigg(\sum\limits_{1\le m\le 2M} \tilde{U}(m) 
\sum\limits_{\substack{x/2<p\le x\\
		p\nmid ab\Delta(a,b)}}  \tilde{a}_{E(a,b)}(p^m) 
		+\O\left(\mathcal{L}x^{\varepsilon}L^{-1}\right)\Bigg)^r,
\end{split}
\end{equation*}
where we note that the inner-most sum over $p$ on the right-hand side is empty if $a=0$ or $b=0$. Now we define
$$
F_r:=\frac{1}{4AB}  \sum\limits_{|a|\le A} \sum\limits_{|b|\le B} \Bigg(\sum\limits_{1\le m\le 2M} \tilde{U}(m) 
\sum\limits_{\substack{x/2<p\le x\\
		p\nmid ab\Delta(a,b)}}  \tilde{a}_{E(a,b)}(p^m) \Bigg)^r
$$
and
$$
\tilde{F}_r:=\frac{1}{4AB}  \sum\limits_{|a|\le A} \sum\limits_{|b|\le B} \left| \sum\limits_{1\le m\le 2M} 
\tilde{U}(m) \sum\limits_{\substack{x/2<p\le x\\
		p\nmid ab\Delta(a,b)}}  \tilde{a}_{E(a,b)}(p^m) \right|^r.
$$
Then using the binomial formula, we deduce that
\begin{equation*} 
E_r=F_r+ \O_{\varepsilon,r}\left(\sum\limits_{s=0}^{r-1} \tilde{F}_s\left(\mathcal{L}x^{\varepsilon}L^{-1}\right)^{r-s}\right).
\end{equation*}
Using the Cauchy-Schwarz inequality, we observe that
\begin{equation*} \label{in}
\tilde{F}_s\le \tilde{F}_{2s}^{1/2}= F_{2s}^{1/2},
\end{equation*}
where the last equation holds because the coefficients $\tilde{U}(m)$ are real. Hence, we have
\begin{equation} \label{EtXt}
E_r=F_r+ \O_{\varepsilon,r}\left(\sum\limits_{s=0}^{r-1} F_{2s}^{1/2}\left(\mathcal{L}x^{\varepsilon}L^{-1}\right)^{r-s}\right).
\end{equation}

Now we continue in a similar way as in \cite{BP19}. However, there are two significant differences between the setting in the present paper and that in \cite{BP19}: Firstly, in \cite{BP19}, 
the coefficients $U(m)$ satisfied the bound $U(m)\ll 1/m$, whereas here we have the bound $U(m)\ll 1/L^2$. 
Secondly, in \cite{BP19}, we derived our 
final moment bound under the condition that $M\ge \tilde{\pi}(x)^{1/2}$, which is not the case here. In the following, we perform the required
adjustments to make our method in \cite{BP19} work in the context of this paper. 

Opening the $r$-power and applying identities for the coefficients, we established in \cite{BP19} (with $M$ in place of $2M$) that 
\begin{equation} \label{fourth}
\begin{split}
F_r
= &  \sum\limits_{u=1}^r 
\sum\limits_{\alpha_1=0}^{\infty} \cdots \sum\limits_{\alpha_u=0}^{\infty} 
C(\alpha_1,\ldots,\alpha_u) \sum\limits_{\substack{x/2< p_1,\ldots,p_u\le x\\  p_{\rho}\not = p_{\sigma} \mbox{\scriptsize\ if } 1\le \rho<\sigma\le u}}
\frac{1}{4AB}\times\\ & \mathop{\sum\limits_{|a|\le A} \sum\limits_{|b|\le B}}_{(ab\Delta(a,b),p_1\ldots p_u)=1} 
\tilde{a}_{E(a,b)}\left(p_1^{\alpha_1}\cdots p_u^{\alpha_u}\right)
\end{split}
\end{equation}
with
\begin{equation*}
C(\alpha_1,\ldots,\alpha_u):=\sum\limits_{\{1,\ldots,r\}=\mathcal{S}_1\dot\cup\cdots \dot\cup \mathcal{S}_u} 
\sum\limits_{1\le m_1,\ldots,m_r\le 2M} \tilde{U}(m_1)\cdots \tilde{U}(m_r) \prod\limits_{j=1}^u 
D\left(\left(m_i\right)_{i\in \mathcal{S}_j};\alpha_j\right),
\end{equation*}
where the numbers $D(m_1,\ldots,m_r,m)$ are integers for which the following was proved in \cite{PrabhuSinha}.

\begin{lemma} \label{prodpolys} Assume that $m_1,...,m_r\in \mathbb{N}$ and $E$ has good reduction at $p$. 
	Set $\Sigma=m_1+\cdots+m_r$. Then
	\begin{equation*} \label{products}
	\prod\limits_{i=1}^r \tilde{a}_E\left(p^{m_i}\right) = \sum\limits_{m=0}^{\infty} D(m_1,\ldots,m_r;m)\tilde{a}_E(p^m),
	\end{equation*}
	where $D(m_1,\ldots,m_r;m)$ are nonnegative integers satisfying
	\begin{equation*} \label{Dbounds}
	\begin{split}
	D(m_1,\ldots,m_r,m)= & 0 \quad \mbox{ if } m>\Sigma, \\
	D(m_1,\ldots,m_r;m)= & \O\left(\Sigma^{r-2}\right) \quad \mbox{ if } r\ge 2 \mbox{ and } 1\le m\le \Sigma,\\
	D(m_1,\ldots,m_r;0)= & \O\left(\Sigma^{r-3}\right) \quad \mbox{ if } r\ge 3,\\
	D(m_1,m_2;0) = & \begin{cases} 1 & \mbox{ if } m_1=m_2\\ 0 & \mbox{ if } m_1\not=m_2, \end{cases}\\
	D(m_1;m)= & \begin{cases} 1 & \mbox{ if } m_1=m\\ 0 & \mbox{ if } m_1\not=m.\end{cases} 
	\end{split}
	\end{equation*}
\end{lemma}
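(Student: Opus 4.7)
The plan is to induct on $r$, with the key algebraic input being the Chebyshev-like structure of the normalized Hecke eigenvalues. Since $E$ has good reduction at $p$, the Hasse bound lets us write $\tilde{a}_E(p) = \alpha_p + \beta_p$ with $\alpha_p\beta_p = 1$ and $|\alpha_p|=|\beta_p|=1$. The recursion $a_E(p^{n+1}) = a_E(p)a_E(p^n) - p\cdot a_E(p^{n-1})$ (valid at primes of good reduction) then gives $\tilde{a}_E(p^m) = \alpha_p^m + \alpha_p^{m-2} + \cdots + \beta_p^m$, from which the fundamental product formula
$$\tilde{a}_E(p^m)\tilde{a}_E(p^n) = \sum_{k=0}^{\min(m,n)} \tilde{a}_E\!\left(p^{m+n-2k}\right)$$
follows by direct expansion of the product of the two sums and regrouping. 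This settles the cases $r=1$ and $r=2$ at once, and also verifies the explicit formulas for $D(m_1;m)$ and $D(m_1,m_2;0)$.

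For $r\ge 3$, apply the induction hypothesis to $\prod_{i=1}^{r-1}\tilde{a}_E(p^{m_i})$, multiply through by $\tilde{a}_E(p^{m_r})$, and expand the result using the product formula. Writing $\Sigma' := m_1+\cdots+m_{r-1}$, we arrive at the recursion
$$D(m_1,\ldots,m_r;n) = \sum_{\substack{m,k\ge 0\\ m+m_r-2k=n\\ k\le\min(m,m_r)}} D(m_1,\ldots,m_{r-1};m).$$
Nonnegativity and integrality of $D(m_1,\ldots,m_r;n)$ are immediate by induction, and the vanishing for $n>\Sigma$ is clear because the largest index produced is $\Sigma$.

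For the estimate $D(m_1,\ldots,m_r;n) = O(\Sigma^{r-2})$ with $1\le n\le \Sigma$, observe that the relation $k=(m+m_r-n)/2$ determines $k$ from $m$, so the sum has $O(\Sigma)$ admissible values of $m$; each contributes $O(\Sigma^{r-3})$ by induction, yielding $O(\Sigma^{r-2})$ in total. The delicate point is the sharper bound for $n=0$: here the constraints $k=(m+m_r)/2$ and $k\le\min(m,m_r)$, together with the assumption $m_r\ge 1$, force $k=m_r$ and hence $m=m_r$, collapsing the sum to the single term $D(m_1,\ldots,m_{r-1};m_r)$. Because $m_r\ge 1$, this falls under the $1\le m\le \Sigma'$ bound of the inductive hypothesis and is therefore $O(\Sigma'^{r-3}) = O(\Sigma^{r-3})$. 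I expect this collapse in the $n=0$ case to be the main subtlety of the argument: it crucially exploits $m_i\ge 1$, and without the resulting exponent drop from $\Sigma^{r-2}$ to $\Sigma^{r-3}$ the downstream variance computation in Proposition \ref{variance-exact} would not isolate a clean main term.
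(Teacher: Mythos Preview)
Your argument is correct. The paper itself does not prove this lemma but quotes it from \cite{PrabhuSinha}; the proof you give is essentially the same induction carried out there, based on the Clebsch--Gordan type identity $X_mX_n=\sum_{k=0}^{\min(m,n)}X_{m+n-2k}$ for the Chebyshev polynomials of the second kind (recorded in the present paper as Lemma~\ref{key-lemma}(a)). The crucial observation---that for $n=0$ the constraints $k=(m+m_r)/2$ and $k\le \min(m,m_r)$ force $m=m_r\ge 1$, collapsing the recursion to a single term governed by the stronger inductive bound---is handled correctly, and your remark that this relies on $m_i\ge 1$ is exactly the point.

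One small correction to your closing commentary: the exponent drop from $\Sigma^{r-2}$ to $\Sigma^{r-3}$ at $m=0$ is not used in Proposition~\ref{variance-exact} (which is a pure Fourier-analytic identity) but rather feeds into Lemma~\ref{Cbounds}, where it is responsible for the extra factor of $M^{-1}$ per vanishing index $\alpha_i=0$ in the bound \eqref{Calphaest} and for the clean closed form \eqref{000ext}. That refinement is what ultimately allows the main term in Proposition~\ref{Xtapp} to separate from the error.
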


Using Lemma \ref{prodpolys} and $\tilde{U}(m)\ll 1/L^2$, we deduce the following results on $C(\alpha_1,\ldots,\alpha_r)$.

\begin{lemma} \label{Cbounds}
	Set 
	\begin{equation} \label{zn}
	z:=\sharp\{i\in \{1,\ldots,u\} :\alpha_i=0\} \quad \mbox{and} \quad n:=\sharp\{i\in \{1,\ldots,u\} : \alpha_i\not=0\}.
	\end{equation}
	Then 
	\begin{eqnarray}
	\label{Calphaest}
	C(\alpha_1,\ldots,\alpha_u) & = & \O_r\left(M^{2r-2n-3z}L^{-2r}\right) \quad \mbox{ if } 2z+n\le r\\
	\label{toomuch1}
	C(\alpha_1,\ldots,\alpha_u) & = & 0 \quad \mbox{ if } 2z+n>r \\    
	\label{000ext}
	C(0,\ldots,0)& = & \frac{(2z)!}{2^zz!}\cdot Z^z\quad \mbox{ if } 2z=r \\  
	\label{toomuch}
	C(\alpha_1,\ldots,\alpha_u) & = & 0 \quad \mbox{ if } \alpha_i>2rM \mbox{ for an } i\in \{1,\ldots,u\},  
	\end{eqnarray}
	where 
	\begin{equation} \label{Zdef}
	Z:=\sum\limits_{1\le m\le 2M} \tilde{U}(m)^2=\sum\limits_{1\le m\le M} U(m)^2.
	\end{equation}
\end{lemma}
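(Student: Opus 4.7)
My plan is to dissect the defining sum
$$C(\alpha_1,\ldots,\alpha_u)=\sum_{\{1,\ldots,r\}=\mathcal{S}_1\dot\cup\cdots\dot\cup\mathcal{S}_u}\ \sum_{1\le m_1,\ldots,m_r\le 2M}\tilde{U}(m_1)\cdots\tilde{U}(m_r)\prod_{j=1}^{u}D\!\left((m_i)_{i\in\mathcal{S}_j};\alpha_j\right)$$
block by block, invoking the vanishing and size bounds for $D$ in Lemma \ref{prodpolys} together with the uniform bound $|\tilde{U}(m)|\ll L^{-2}$ supplied by Lemma \ref{Umbound}. Throughout I will write $k_j:=|\mathcal{S}_j|$, so that $\sum_j k_j=r$ and $u=n+z$, where $z$ and $n$ are as in \eqref{zn}.

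I would dispatch the two vanishing claims first. For \eqref{toomuch}, the relation $D((m_i)_{i\in\mathcal{S}_j};\alpha_j)=0$ whenever $\alpha_j>\sum_{i\in\mathcal{S}_j}m_i$, combined with $m_i\le 2M$ and $k_j\le r$, forces $D=0$ as soon as $\alpha_j>2rM$. For \eqref{toomuch1}, the identity $D(m_1;0)=0$ for every $m_1\ge 1$ in Lemma \ref{prodpolys} forces $k_j\ge 2$ whenever $\alpha_j=0$, so $r=\sum_j k_j\ge 2z+n$, and the hypothesis $2z+n>r$ leaves the partition-sum empty.

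The substantive step is \eqref{Calphaest}. For a fixed partition I would treat the blocks in four cases. When $\alpha_j\neq 0$ and $k_j=1$, the innermost sum collapses to $\tilde{U}(\alpha_j)\ll L^{-2}$. When $\alpha_j\neq 0$ and $k_j\ge 2$, Lemma \ref{prodpolys} gives $D\ll_r(2Mk_j)^{k_j-2}\ll_r M^{k_j-2}$, so extending each $m_i$-sum trivially up to $2M$ and using $|\tilde{U}|\ll L^{-2}$ yields a contribution $\ll_r M^{2k_j-2}L^{-2k_j}$. When $\alpha_j=0$ and $k_j=2$, the reduction $D(m_1,m_2;0)=1$ only for $m_1=m_2$ collapses the double sum to $\sum_m\tilde{U}(m)^2\ll M L^{-4}$, that is $\ll M^{2k_j-3}L^{-2k_j}$. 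When $\alpha_j=0$ and $k_j\ge 3$, the bound $D\ll_r M^{k_j-3}$ gives $\ll_r M^{2k_j-3}L^{-2k_j}$. In every case the per-block estimate is $\ll_r M^{2k_j-2}L^{-2k_j}$, with an additional factor of $M^{-1}$ extracted whenever $\alpha_j=0$. Multiplying over $j$ and using $\sum_j k_j=r$, the number of blocks with $\alpha_j=0$ equals $z$, and $u=n+z$, I obtain $M^{2r-2n-3z}L^{-2r}$ for each single partition; since the number of partitions of $\{1,\ldots,r\}$ is bounded in terms of $r$ alone, \eqref{Calphaest} follows.

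Finally, for \eqref{000ext}, the hypothesis $2z=r$ combined with $k_j\ge 2$ (forced by $\alpha_j=0$) and $\sum_j k_j=r=2z=2u$ forces $k_j=2$ for every $j$. The inner sum then reduces on each block to $\sum_{1\le m\le 2M}\tilde{U}(m)^2=Z$ by \eqref{Zdef}, so each surviving partition contributes exactly $Z^z$, and the remaining task is to count the partitions of a $2z$-element set into $z$ parts of size two, which yields the classical double-factorial count $(2z-1)!!=(2z)!/(2^zz!)$. The main obstacle is the meticulous bookkeeping of exponents in \eqref{Calphaest}, in particular confirming that a further factor of $M^{-1}$ is gained precisely in each block with $\alpha_j=0$; once this structural point is in hand, the remaining three assertions are combinatorially transparent consequences of Lemma \ref{prodpolys}.
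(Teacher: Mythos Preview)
Your proposal is correct and is precisely the argument the paper has in mind: the paper's entire proof consists of the single sentence ``Using Lemma \ref{prodpolys} and $\tilde{U}(m)\ll 1/L^2$, we deduce the following results on $C(\alpha_1,\ldots,\alpha_r)$,'' and your block-by-block case analysis is exactly the natural elaboration of that sketch. The vanishing claims \eqref{toomuch1} and \eqref{toomuch}, the exponent bookkeeping in \eqref{Calphaest}, and the perfect-matching count for \eqref{000ext} are all handled correctly.
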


In particular, using \eqref{Mdef} and \eqref{square}, we deduce the general bound
\begin{equation} \label{genCbound}
 C(\alpha_1,\ldots,\alpha_u)=\O_{\varepsilon,r,u}\left(x^{\varepsilon}M^{-2u}\right). 
\end{equation}

In \cite[subsection 10.4.]{BP19}, we proved the asymptotic estimate
\begin{equation} \label{redtoS}
\begin{split}
& \frac{1}{4AB}  
\mathop{\sum\limits_{|a|\le A} \sum\limits_{|b|\le B}}_{(ab\Delta(a,b),p_1\cdots p_u)=1} \tilde{a}_{E(a,b)}\left(p_1^{\alpha_1}\cdots p_u^{\alpha_u}\right)\\
= & S\left(p_1^{\alpha_1}\right)\cdots S\left(p_u^{\alpha_u}\right) + \O_{u,\varepsilon}\left(
\prod\limits_{i=1}^u \left(\alpha_i+1\right)\cdot x^{u/2+\varepsilon}\left(A^{-1}+B^{-1}\right)\right)
\end{split}
\end{equation}
for the averages of coefficients in \eqref{fourth}, where the function $S(n)$ is defined as 
\begin{equation*} \label{Sn}
S(n):=\frac{1}{s(n)^2} \mathop{\sum\limits_{a=1}^{s(n)} \sum\limits_{b=1}^{s(n)}}_{(ab\Delta(a,b),n)=1} \tilde{a}_{E(a,b)}(n),
\end{equation*}
$s(n)$ being the largest square-free integer dividing $n$.
Combining \eqref{fourth}, \eqref{toomuch}, \eqref{genCbound} and \eqref{redtoS}, we obtain
\begin{equation} \label{fifth}
\begin{split}
F_r = &  \sum\limits_{u=1}^r 
\sum\limits_{\alpha_1=0}^{2rM} \cdots \sum\limits_{\alpha_u=0}^{2rM} 
C(\alpha_1,\ldots,\alpha_u) \sum\limits_{\substack{x/2< p_1,...,p_u\le x\\ p_{\rho}\not = p_{\sigma} \mbox{\scriptsize\ if } 1\le \rho<\sigma\le u}}
S\left(p_1^{\alpha_1}\right)\cdots S\left(p_u^{\alpha_u}\right)\\ & 
+ \O_{\varepsilon,r}\left(x^{3r/2+\varepsilon}\left(A^{-1}+B^{-1}\right)\right).
\end{split}
\end{equation}
To bound the sums over primes in \eqref{fifth}, we established the following in \cite[subsection 10.6.]{BP19} (with $rM$ in place of $2rM$). 

\begin{lemma} \label{Save} Assume that $1\le \alpha_i\le 2rM$ for $i\in \{1,\ldots,u\}$. Then, under MRH, we have
	\begin{equation*} 
	\sum\limits_{\substack{x/2< p_1,\ldots,p_u\le x\\ p_{\rho}\not = p_{\sigma} \mbox{\scriptsize\ if } 
	1\le \rho<\sigma\le u}} S\left(p_1^{\alpha_1}\right)\cdots
	S\left(p_u^{\alpha_u}\right)
	\ll \alpha_1\cdots \alpha_u (\log x)^u.
	\end{equation*}
\end{lemma}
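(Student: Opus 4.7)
My plan is to closely follow the strategy of \cite[\S 10.6]{BP19}. The striking feature of this bound is that it is independent of $x$ up to logarithmic factors, reflecting that the definition of $S(n)$ removes the would-be main term in the averages over $(a,b)$, and that MRH supplies the full cancellation in the remaining prime sum.

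\textbf{Reduction to Hecke eigenforms.} I first pass from $\tilde a_{E(a,b)}(p^\alpha)$ to its even prime-power analogues via $\tilde a_E(p^j) - \tilde a_E(p^{j-2}) = 2\cos(j\pi\theta_E(p))$, so that $\tilde a_E(p^\alpha)$ becomes a bounded combination of $\tilde a_E(p^{2j})$ with $j \le \alpha$. Birch's formula \cite{Birch} then expresses each inner sum $\sum_{a,b\bmod p} \tilde a_{E(a,b)}(p^{2j})$ as a linear combination, with explicit powers of $p$ and bounded rational coefficients, of the Hecke traces $\operatorname{Tr}(T_{p^{2j}} \mid S_k(\SL_2(\Z)))$ for a bounded set of weights $k$. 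By the Eichler--Selberg trace formula, each such trace equals $\sum_{f \in \mathcal F_{1,k}} c_k(f)\, a_f(p^{2j})$ for explicit weights $c_k(f)$. The outcome is a representation
$$S(p^\alpha) = \sum_{(k,j)} \frac{1}{p^{\gamma_{k,j}}} \sum_{f \in \mathcal F_{1,k}} \kappa_{k,j}(f)\, a_f(p^{2j}),$$
with $O_r(\alpha)$ outer terms and explicit exponents $\gamma_{k,j}$.

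\textbf{Factoring and bounding the prime sum.} After substituting into $\prod_i S(p_i^{\alpha_i})$ and interchanging summations, the contribution from coincidences $p_\rho = p_\sigma$ can be controlled trivially (using the $u-1$ case inductively) and absorbed in the error, so the $u$-fold prime sum factorizes into a product of the single-prime sums $\sum_{x/2 < p \le x} S(p^{\alpha_i})$. The key estimate is then
$$\sum_{x/2 < p \le x} S(p^\alpha) \ll \alpha \log x.$$
Under MRH, the standard explicit formula for $L(s,f)$ applied to each Hecke eigenform $f \in \mathcal F_{1,k}$ yields $\sum_{x/2 < p \le x} a_f(p) \log p \ll \sqrt{x}(\log x)^2$; expanding $a_f(p^{2j})$ as a polynomial in $a_f(p)$ via the Hecke recursion and summing by parts gives $\sum_{x/2 < p \le x} a_f(p^{2j})\, p^{-\gamma_{k,j}} \ll j \log x$, the exponent $\gamma_{k,j}$ from Birch's formula being exactly what is required for $p^{-\gamma_{k,j}}$ to absorb the factor $\sqrt{x}$. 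Carrying out the remaining outer sum over $(k,j)$ (with $O(\alpha)$ terms) and the sum over $f \in \mathcal F_{1,k}$ (of bounded aggregate weight) yields the single-prime estimate; the lemma then follows by taking the $u$-fold product.

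\textbf{Main obstacle.} The delicate point will be the precise alignment of the $p$-power exponents $\gamma_{k,j}$ from Birch's formula with the square-root cancellation provided by MRH, so that the two conspire to eliminate every polynomial power of $x$. This is the sole step where MRH intervenes, and it is exactly this reduction that breaks down (forcing the use of hypotheses stronger than MRH) when the smooth $\phi_L$ is replaced by the characteristic function of an interval, as in \cite{BP19}.
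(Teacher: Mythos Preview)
The paper does not give a proof here; it simply imports the statement from \cite[\S10.6]{BP19}, noting only that the upper bound on $\alpha_i$ is $2rM$ rather than $rM$. Your proposal is therefore an attempted reconstruction of that argument, and it does identify the right ingredients---Birch's formula, the passage to Hecke eigenforms for $\SL_2(\mathbb{Z})$, and the MRH-based prime number theorem for those eigenforms.

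Two points, however, need attention. First, your handling of the distinctness condition does not work as written: you claim the coincidence terms ``can be controlled trivially (using the $u-1$ case inductively)'', but a coincidence $p_1=p_2=q$ produces a factor $S(q^{\alpha_1})S(q^{\alpha_2})$, which is not of the form $S(q^{\beta})$, so the $(u-1)$-variable case of the lemma does not apply. What actually tames the coincidence terms is a \emph{pointwise} bound on $S(p^{\alpha})$ coming from Birch's formula together with Deligne: each term carries a negative power of $p$, so that for any block $B$ of coinciding primes one has $\sum_p \prod_{i\in B}|S(p^{\alpha_i})|\ll \big(\prod_{i\in B}\alpha_i\big)\sum_p p^{-|B|/2}$, which is at most logarithmic in $x$ whenever $|B|\ge 2$. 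You should state and use this explicitly rather than appeal to an induction that does not close.

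Second, your closing remark is misplaced. Lemma~\ref{Save} is taken over verbatim from \cite{BP19} and holds equally when the characteristic function of an interval replaces $\phi_L$; nothing in its proof ``breaks down'' in that setting. The gain from the smooth weight enters elsewhere in the paper---in the bounds on $U(m)$ and the small cutoff $M$ (Lemmas~\ref{Umbound} and~\ref{Umsum})---which are what make the error terms in Proposition~\ref{Xtapp} smaller than the main term. For the characteristic function, $M$ must be taken much larger and that balance fails; the present lemma is not the culprit.
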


We also want to include the $\alpha_i$'s with $\alpha_i=0$ in our result. Taking into account that $S(1)=1$, we deduce the following 
immediately from Lemma \ref{Save}.

\begin{lemma} \label{Saves} Assume that $0\le \alpha_i\le 2rM$ for $i\in \{1,\ldots,u\}$. Then, under MRH, we have
	\begin{equation*}  
	\sum\limits_{\substack{x/2< p_1,\ldots,p_u\le x\\ p_{\rho}\not = p_{\sigma} \mbox{\scriptsize\ if } 1\le \rho<\sigma\le u}} S\left(p_1^{\alpha_1}\right)\cdots
	S\left(p_u^{\alpha_u}\right)
	\ll \tilde{\pi}(x)^z(rM)^n(\log x)^n 
	\end{equation*}
	with $z$ and $n$ as defined in \eqref{zn}.
\end{lemma}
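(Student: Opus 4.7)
The plan is to reduce Lemma \ref{Saves} directly to Lemma \ref{Save} by separating the indices according to whether $\alpha_i=0$ or $\alpha_i\neq 0$. Split $\{1,\ldots,u\}$ into the subset $\mathcal{Z}=\{i:\alpha_i=0\}$ of cardinality $z$, and $\mathcal{N}=\{i:\alpha_i\neq 0\}$ of cardinality $n$, so $z+n=u$. The crucial (and essentially trivial) input is that $S(1)=1$: indeed, $s(1)=1$, and the defining sum reduces to $\tilde{a}_{E(1,1)}(1)=a_{E(1,1)}(1)=1$.

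Next I would sum first over the primes $p_i$ with $i\in \mathcal{Z}$, under the constraint that they are distinct from each other and from the (yet to be chosen) primes $p_j$ with $j\in \mathcal{N}$. Because each factor $S\left(p_i^{\alpha_i}\right)=S(1)=1$ for $i\in \mathcal{Z}$, this inner sum is just the number of ways of choosing $z$ distinct primes in $(x/2,x]$ avoiding $n$ prescribed primes. That number equals
\begin{equation*}
(\tilde{\pi}(x)-n)(\tilde{\pi}(x)-n-1)\cdots (\tilde{\pi}(x)-n-z+1) \le \tilde{\pi}(x)^z,
\end{equation*}
and is independent of the specific choice of primes indexed by $\mathcal{N}$, so it pulls out as an overall factor.

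Having done this, what remains is exactly a sum of the type treated in Lemma \ref{Save}, but now only over the $n$ distinct primes $p_i$, $i\in\mathcal{N}$, with corresponding exponents $\alpha_i$ satisfying $1\le\alpha_i\le 2rM$. Applying Lemma \ref{Save} bounds this remaining sum by $\prod_{i\in\mathcal{N}}\alpha_i\cdot(\log x)^n\le (2rM)^n(\log x)^n$. Combining the two bounds gives
\begin{equation*}
\left|\sum\limits_{\substack{x/2<p_1,\ldots,p_u\le x\\ p_{\rho}\neq p_{\sigma}}} S\left(p_1^{\alpha_1}\right)\cdots S\left(p_u^{\alpha_u}\right)\right|
\ll \tilde{\pi}(x)^z (2rM)^n (\log x)^n,
\end{equation*}
and the factor $2^n\le 2^r$ is absorbed into the implied constant, which is permitted to depend on $r$. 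No substantial obstacle is anticipated; the only small point of care is the cross-distinctness of primes between $\mathcal{Z}$ and $\mathcal{N}$, handled cleanly by the counting above.
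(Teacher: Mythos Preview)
Your proposal is correct and matches the paper's approach exactly: the paper simply notes that $S(1)=1$ and says the lemma follows immediately from Lemma~\ref{Save}, which is precisely the reduction you carry out in detail. Your handling of the cross-distinctness via the falling factorial count and the absorption of $2^n$ into the $r$-dependent implied constant are both fine.
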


Combining \eqref{fifth} with Lemmas \ref{Cbounds} and \ref{Saves}, we get the following by a short calculation using $u=n+z$.

\begin{prop} \label{Xtapp} Fix $r\in \mathbb{N}$ and $\varepsilon>0$ and assume that $M\le \tilde{\pi}(x)^{1/2}$. Then, under MRH, we have
	\begin{equation} \label{Xtfinal}
	\begin{split}
	F_r= & \delta(r)\cdot \frac{r!}{2^{r/2}(r/2)!} \cdot \left(\tilde{\pi}(x)\sum\limits_{1\le m\le M} U(m)^2\right)^{r/2} + 
	\O_{\varepsilon,r}\left(x^{3r/2+\varepsilon}\left(A^{-1}+B^{-1}\right)\right)+
	\\ & 
	\O_{\varepsilon,r}\left(\left(\tilde{\pi}(x)M^{-2}\right)^{[(r-1)/2]}x^{\varepsilon}\right), 
	\end{split}
	\end{equation}
	where
	$$
	\delta(r)= \begin{cases} 1 & \mbox{ if } r \mbox{ is even,}\\ 0 & \mbox{ if } r \mbox{ is odd.} \end{cases}
	$$
\end{prop}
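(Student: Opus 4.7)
The plan is to start from the decomposition \eqref{fifth} of $F_r$ and regroup the inner sum according to the pair $(z,n)$ defined in \eqref{zn}. By \eqref{toomuch1}, the coefficient $C(\alpha_1,\ldots,\alpha_u)$ vanishes outside the range $2z+n\le r$, so only such configurations contribute.

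I would first isolate the Gaussian main term from the single configuration where $r$ is even, $u=z=r/2$, $n=0$, and $\alpha_1=\cdots=\alpha_u=0$. By \eqref{000ext},
$$C(0,\ldots,0)=\frac{r!}{2^{r/2}(r/2)!}\,Z^{r/2},$$
and since $S(1)=1$, the inner prime sum collapses to the count of ordered $(r/2)$-tuples of distinct primes in $(x/2,x]$, equal to $\tilde{\pi}(x)^{r/2}+O_r(\tilde{\pi}(x)^{r/2-1})$. This yields exactly the claimed main term $\frac{r!}{2^{r/2}(r/2)!}(\tilde{\pi}(x)Z)^{r/2}$; the lower-order piece $Z^{r/2}\tilde{\pi}(x)^{r/2-1}\asymp L^{-3r/2}\tilde{\pi}(x)^{r/2-1}$ falls within $(\tilde{\pi}(x)/M^2)^{[(r-1)/2]}x^{\varepsilon}$ because \eqref{Mdef} gives $L^{-3r/2}\ll M^{-(r-2)}(\log x)^{O_r(1)}$.

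For every other configuration with $2z+n\le r$ and $(z,n)\neq(r/2,0)$, I would apply \eqref{Calphaest} to bound $|C|\ll M^{2r-2n-3z}L^{-2r}$ and invoke Lemma \ref{Saves} (the sole place where MRH enters) to bound the inner prime sum by $\tilde{\pi}(x)^z(rM)^n(\log x)^n$. Summing the $n$ free $\alpha_i$'s over $[1,2rM]$ costs an extra factor $(2rM)^n$, giving a total bound $\ll_r M^{2r-3z}L^{-2r}\tilde{\pi}(x)^z(\log x)^n$. Rewriting $L^{-2r}=M^{-2r}(\log x)^{2r(1/\omega+\varepsilon)}$ via \eqref{Mdef} reduces this to $(\tilde{\pi}(x)/M^3)^z(\log x)^{O_r(1)}\le(\tilde{\pi}(x)/M^2)^z x^{\varepsilon}$. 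The constraints $2z+n\le r$ together with $(z,n)\neq(r/2,0)$ force $z\le[(r-1)/2]$, and the standing hypothesis $M\le\tilde{\pi}(x)^{1/2}$ gives $\tilde{\pi}(x)/M^2\ge 1$, so each such contribution fits within the target error term. Summing over the $O_r(1)$ many choices of $(u,z,n)$ then completes the bound.

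The main obstacle will be the combinatorial bookkeeping across the various $(u,z,n)$ triples and the careful tracking of logarithmic losses when passing from bounds in $L$ to bounds in $M$; every factor of $L^{-2r}$ must be rewritten using \eqref{Mdef}, with the attendant $(\log x)^{2r(1/\omega+\varepsilon)}$ absorbed into $x^{\varepsilon}$. The substantive analytic content is entirely concentrated in Lemma \ref{Saves}, whose essentially square-root cancellation under MRH is what allows the non-diagonal contributions to be controlled; without it, the terms with large $n$ would swamp the Gaussian main term and no central limit theorem could be extracted.
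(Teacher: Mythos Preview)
Your proposal is correct and follows essentially the same route as the paper, which simply states that the result follows ``by a short calculation using $u=n+z$'' after combining \eqref{fifth} with Lemmas \ref{Cbounds} and \ref{Saves}; you have accurately filled in that calculation, including the isolation of the main term from $(z,n)=(r/2,0)$ and the verification that all remaining configurations have $z\le[(r-1)/2]$.
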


We note that the main term on the right-hand side of \eqref{Xtfinal} comes from the contribution of $u=r/2$ and $\alpha_1=\cdots=\alpha_u=0$ 
(and hence, $z=r/2$ and $n=0$) to the 
right-hand side of \eqref{fifth} if $r$ is even. In all other cases, by \eqref{toomuch1}, we have necessarily $z\le [(r-1)/2]$ if 
$C(\alpha_1,\ldots,\alpha_u)\not=0$, which is the reason for the exponent $[(r-1)/2]$ in the error term. 
Using \eqref{Mdef}, \eqref{square} and the trivial inequality $[(r-1)/2]\le (r-1)/2$, the following is a consequence of Proposition \ref{Xtapp}. 

\begin{cor} \label{Xtcor} Fix $r\in \mathbb{N}$ and $\varepsilon>0$. Then, under MRH, we have
	\begin{equation*}
	F_r= \left(\tilde{\pi}(x)\sum\limits_{m\ge 1} U(m)^2\right)^{r/2} \cdot 
	\left(\delta(r)\cdot \frac{r!}{2^{r/2}(r/2)!} + \O_{\varepsilon,r}\left((\log x)^{-1}\right)\right), 
	\end{equation*}
	provided that
	\begin{equation*}
	L\le x^{1/(r+2)-\varepsilon}
	\end{equation*}
	and 
	\begin{equation*}
	A,B\ge \left(L^{3/2}x\right)^{r}x^{\varepsilon}.
	\end{equation*}
\end{cor}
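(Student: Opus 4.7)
The plan is to deduce Corollary \ref{Xtcor} directly from Proposition \ref{Xtapp} by verifying that the two error terms in \eqref{Xtfinal} are both $O_{\varepsilon,r}\!\left((\tilde{\pi}(x)\sum_{m\ge 1}U(m)^2)^{r/2}(\log x)^{-1}\right)$ under the stated hypotheses on $L$, $A$ and $B$, and by replacing the truncated sum $\sum_{1\le m\le M}U(m)^2$ in the main term with the full sum $\sum_{m\ge 1}U(m)^2$. First I check admissibility: since $L\le x^{1/(r+2)-\varepsilon}\le x^{1/3-\varepsilon}$ and $M=\lceil L(\log x)^{1/\omega+\varepsilon}\rceil$, we have $M\ll x^{1/3+\varepsilon}\ll \tilde{\pi}(x)^{1/2}$ for large $x$, so Proposition \ref{Xtapp} applies. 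By Lemma \ref{Umsum}, $\sum_{m\ge 1}U(m)^2\asymp L^{-3}$, which pins the reference size of the main term at $(\tilde{\pi}(x)L^{-3})^{r/2}\asymp x^{r/2}L^{-3r/2}(\log x)^{-r/2}$.

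Extending the summation in the main term is painless: the second part of \eqref{square} provides a difference of size $O(x^{-2019})$, whereas $L^{-3}\gg x^{-3/(r+2)}$ is far larger, so the relative error incurred on raising to the $(r/2)$-th power is completely negligible. I then compare each of the two error terms of \eqref{Xtfinal} against the reference size. For the first, $x^{3r/2+\varepsilon}(A^{-1}+B^{-1})$, the hypothesis $A,B\ge (L^{3/2}x)^r x^{\varepsilon}$ yields a bound $\ll x^{-\varepsilon/2}(xL^{-3})^{r/2}$; the factor $x^{-\varepsilon/2}$ absorbs any polylogarithmic discrepancy from the reference size and provides the required $O((\log x)^{-1})$ saving.

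For the second error term, $(\tilde{\pi}(x)M^{-2})^{[(r-1)/2]}x^{\varepsilon}$, I substitute $M\asymp L(\log x)^{1/\omega+\varepsilon}$ and divide by the reference size. Up to polylog factors, the quotient is $x^{-1/2+\varepsilon}L^{r/2+1}$ when $r$ is odd (so $[(r-1)/2]=(r-1)/2$) and $x^{-1+\varepsilon}L^{r/2+2}$ when $r$ is even (so $[(r-1)/2]=(r-2)/2$). Requiring each to be $o((\log x)^{-1})$ translates to $L\le x^{1/(r+2)-\varepsilon}$ in the odd case and $L\le x^{2/(r+4)-\varepsilon}$ in the even case; since $1/(r+2)<2/(r+4)$ for every $r\ge 1$, the odd-$r$ condition is the binding one, and it is precisely the hypothesis imposed in the corollary.

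The one subtle point deserves flagging: for odd $r$ the main term of \eqref{Xtfinal} already vanishes, yet the second error still has to be measured against the hypothetical main-term size $(\tilde{\pi}(x)\sum U(m)^2)^{r/2}$ that appears in the corollary's statement. This is what forces the stricter exponent $1/(r+2)$ in place of the lazier $2/(r+4)$; settling for the even-$r$ bound would leave the odd moments uncontrolled and would destroy the central-limit interpretation that Theorem \ref{main-elliptic} is ultimately after.
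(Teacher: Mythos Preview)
Your proof is correct and follows essentially the same route as the paper, which derives the corollary from Proposition \ref{Xtapp} using \eqref{Mdef}, \eqref{square}, and the inequality $[(r-1)/2]\le (r-1)/2$. The only cosmetic difference is that the paper applies $[(r-1)/2]\le (r-1)/2$ uniformly (valid since $\tilde\pi(x)M^{-2}>1$ under the hypothesis on $L$), whereas you split into the odd and even cases and then observe that the odd case is binding; both lead to the same constraint $L\le x^{1/(r+2)-\varepsilon}$.
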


Using \eqref{square}, \eqref{EtXt} and Corollary \ref{Xtcor}, we deduce the following for the moment $E_r$ in question. 

\begin{cor} \label{Etappfin} Fix $r\in \mathbb{N}$ and $\varepsilon>0$. Then, under MRH, we have
	\begin{equation*} 
	E_r = \left(\tilde{\pi}(x)\sum\limits_{m\ge 1} U(m)^2\right)^{r/2}
	\left(\delta(r)\cdot \frac{r!}{2^{r/2}(r/2)!} + \O_{\varepsilon,r}\left((\log x)^{-1}\right)\right), 
	\end{equation*}
	provided that
	\begin{equation*}
	L\le x^{1/\max\{2r,r+2\}-\varepsilon},
	\end{equation*}
	\begin{equation*} 
	A,B\ge \left(L^{3/2}x\right)^{\max\{2(r-1),r\}}x^{\varepsilon}
	\end{equation*}
	and 
	\begin{equation*}
	 \log(2AB)\le x^{1/2-\varepsilon}.
	\end{equation*}
\end{cor}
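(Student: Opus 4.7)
The starting point is the identity \eqref{EtXt}, which decomposes $E_r$ as $F_r$ plus a Cauchy--Schwarz-derived error of the form $\sum_{s=0}^{r-1} F_{2s}^{1/2}(\mathcal{L} x^{\varepsilon} L^{-1})^{r-s}$. Thus the full main term of $E_r$, together with its $O((\log x)^{-1})$ relative error, will come from Corollary \ref{Xtcor} applied to $F_r$, while the remaining terms in \eqref{EtXt} must be shown to be absorbed into the same order of error.

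First, I would apply Corollary \ref{Xtcor} directly to $F_r$. Its hypotheses $L\le x^{1/(r+2)-\varepsilon}$ and $A,B\ge (L^{3/2}x)^r x^{\varepsilon}$ are automatically satisfied under the stated conditions, giving $F_r = (\tilde{\pi}(x)\sum_{m\ge 1} U(m)^2)^{r/2}(\delta(r)\cdot r!/(2^{r/2}(r/2)!) + O((\log x)^{-1}))$. Next, for each $s$ with $1\le s\le r-1$, I would reapply Corollary \ref{Xtcor} with $r$ replaced by $2s$, obtaining $F_{2s}\ll (\tilde{\pi}(x)\sum U(m)^2)^{s}$ and hence $F_{2s}^{1/2}\ll (\tilde{\pi}(x)\sum U(m)^2)^{s/2}$. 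The worst-case hypotheses of these auxiliary applications (at $s=r-1$) are $L\le x^{1/(2r)-\varepsilon}$ and $A,B\ge (L^{3/2}x)^{2(r-1)} x^{\varepsilon}$; taken together with the conditions needed for $F_r$, they account precisely for the ``$\max$'' appearing in the stated conditions. For $s=0$ one trivially has $F_0 = 1 + O(1/(AB))$.

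Finally, substituting these bounds into the error term of \eqref{EtXt} and invoking \eqref{square} to replace $\sum_{m\ge 1} U(m)^2$ by a quantity $\asymp L^{-3}$ (together with $\tilde{\pi}(x)\asymp x/\log x$), one compares each error contribution to the main term. The principal technical obstacle is exactly this bookkeeping step: one must verify, through careful tracking of powers, that $F_{2s}^{1/2}(\mathcal{L} x^{\varepsilon} L^{-1})^{r-s}$ is $O((\tilde{\pi}(x)\sum U(m)^2)^{r/2}/\log x)$ for each $s$ with $0\le s\le r-1$, given the delicate interplay between $L$, $\mathcal{L}$, and $x$ imposed by the hypotheses $L\le x^{1/\max\{2r,r+2\}-\varepsilon}$ and $\log(2AB)\le x^{1/2-\varepsilon}$. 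The two conditions on $L$ and on $\log(2AB)$ are tailored exactly to kill the worst of these terms (which is the $s=r-1$ term, where the factor $F_{2s}^{1/2}$ is of largest order); once this accounting is verified, the claimed asymptotic for $E_r$ follows at once from \eqref{EtXt}.
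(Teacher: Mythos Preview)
Your proposal is correct and follows exactly the route indicated in the paper: combine \eqref{EtXt} with Corollary \ref{Xtcor} applied to $F_r$ and to each $F_{2s}$ for $1\le s\le r-1$, then use \eqref{square} for the final comparison of sizes. Your observation that the worst-case auxiliary hypotheses (from $F_{2(r-1)}$) together with those for $F_r$ produce precisely the $\max\{2r,r+2\}$ and $\max\{2(r-1),r\}$ in the stated conditions is exactly the point, and the paper's one-line proof (``Using \eqref{square}, \eqref{EtXt} and Corollary \ref{Xtcor}'') is nothing more than the argument you have written out.
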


This together with Proposition \ref{variance-exact} implies the result in Theorem \ref{main-elliptic}, where we had fixed the number $L$.

\section{Computation of moments: Modular forms}\label{Moments-modform}
In this section, we prove Theorem \ref{main-modular}.  Before we proceed with the proof, we record the following proposition. 
\begin{prop}\label{cut-off}
Suppose $\Phi \in C^{\infty}(\R)$ is a real-valued, even function. For $f \in \mathcal F_{N,k},$ as defined before, 
    $$N_{\Phi,L,f}(x) := \sum_{p \leq x \atop{p \nmid N}}\phi_L(\thtp).$$
\begin{enumerate}
\item[{\bf(a)}] Suppose the Fourier transform $\widehat{\Phi}$ is compactly supported in the interval $[-B,B].$  Then,
\begin{align*}\label{cut-off-0}
N_{\Phi,L,f}(x) -\pi_N(x)\int_0^1\phi_L(t)\mu(t)dt =  \sum\limits_{1\leq m\leq M} {U}(m)\sum_{  p \leq x \atop{p \nmid N}}a_f(p^{2m}),
\end{align*}
with $M \asymp BL.$

\item[{\bf(b)}] Suppose $\widehat{\Phi}(t) \ll e^{-\lambda |t|^{\omega}}$ for some $\lambda,\omega >0$ as $|t| \to \infty.$  Then,
\begin{align*}
N_{\Phi,L,f}(x) -\pi_N(x)\int_0^1\phi_L(t)\mu(t)dt =  \sum\limits_{1\leq m\leq M} {U}(m)\sum_{  p \leq x \atop{p \nmid N}}a_f(p^{2m}) + \O\left(x^{-2019}\right),
\end{align*}
for
$$M \asymp L\left(\frac{2020 \lambda}{1 - \epsilon} \log x\right)^{\frac{1}{\omega}}.$$
\end{enumerate}
For the above choice of $M$ in (b), we also have
\begin{equation}\label{squares}
\sum_{m=1}^M U(m)^2 = \sum_{m \geq 1} U(m)^2 +  \O(x^{-2019}).
\end{equation}

\end{prop}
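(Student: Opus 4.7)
The plan is to start from the exact identity \eqref{subtracted mean} established in Section \ref{FourierPrelim},
$$N_{\Phi,L,f}(x) - \pi_N(x) \int_0^1 \phi_L(t)\mu(t)\,dt = \sum_{m \ge 1} U(m) \sum_{\substack{p \le x\\ p \nmid N}} a_f(p^{2m}),$$
and to show that in each of the two cases the outer sum over $m$ may be truncated at $M$ either exactly or at a negligible cost.

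For part (a), I would argue directly from the definition \eqref{U(m)-defn}: since $\widehat{\Phi}$ is supported in $[-B,B]$, one has $\widehat{\Phi}(m/L) = 0$ whenever $m/L > B$, and consequently both $\widehat{\Phi}(m/L)$ and $\widehat{\Phi}((m+1)/L)$ vanish as soon as $m > BL$. Thus $U(m) = 0$ beyond $M = \lceil BL\rceil$, so the identity reduces exactly to a finite sum with no error, giving the stated $M \asymp BL$.

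For part (b), the main tools are Deligne's bound $|a_f(p^{2m})| \le 2m+1$, the estimate $|U(m)| \ll L^{-1} e^{-\lambda (m/L)^{\omega}}$ (a direct consequence of the definition of $U(m)$ combined with the assumed decay of $\widehat{\Phi}$, exactly as in Lemma \ref{Umbound}), and the trivial bound $\pi_N(x) \le x$. These ingredients yield
$$\Bigg| \sum_{m > M} U(m) \sum_{\substack{p \le x\\ p \nmid N}} a_f(p^{2m}) \Bigg| \ll \frac{x}{L} \sum_{m > M} m\, e^{-\lambda (m/L)^{\omega}}.$$
For the value of $M$ specified in the statement, $e^{-\lambda (M/L)^{\omega}}$ becomes a sufficiently large negative power of $x$, and the remaining sum is controlled by reducing to a geometric-type tail exactly as in the proof of \eqref{modulus} in Lemma \ref{Umsum}; choosing the constant $2020\lambda/(1-\epsilon)$ in the definition of $M$ appropriately then absorbs the entire tail into $O(x^{-2019})$. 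The companion estimate \eqref{squares} is then obtained by the same truncation applied to $|U(m)|^2 \ll L^{-2} e^{-2\lambda (m/L)^{\omega}}$, yielding $\sum_{m > M} U(m)^2 \ll L^{-2} e^{-2\lambda(M/L)^{\omega}}$ times a rapidly decaying tail, which again fits into $O(x^{-2019})$.

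The only real obstacle in the argument is the bookkeeping of constants in part (b), in particular ensuring that the polynomial prefactor $2m+1$ arising from Deligne and the factor $\pi_N(x) \le x$ are comfortably absorbed by the exponential decay at the truncation point. Since this is essentially the same routine computation already carried out for the elliptic-curve moments in Section \ref{Moments-ellcurv}, I expect no substantial difficulty beyond verifying the numerical choice of the constant in $M$.
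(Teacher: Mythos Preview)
Your proposal is correct and follows essentially the same approach as the paper: both start from the identity \eqref{subtracted mean}, dispose of part (a) immediately from the support of $\widehat{\Phi}$, and handle part (b) via Deligne's bound $|a_f(p^{2m})|\le 2m+1$ together with the exponential decay of $U(m)$. The only cosmetic difference is that the paper bounds the tail $\sum_{m>M} m\,e^{-\lambda(m/L)^{\omega}}$ by comparison with the integral $\int_M^{\infty}(y/L)e^{-\lambda(y/L)^{\omega}}\,dy$ rather than by the Bernoulli-inequality reduction to a geometric series you invoke from Lemma \ref{Umsum}; either route gives the required $O(x^{-2019})$.
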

\begin{proof}
By equation \eqref{subtracted mean}, we have,
\begin{align*}
N_{\Phi,L,f}(x) - \pi_N(x)  \int_0^1 \phi_L(t)\mu(t) dt = \sum_{m \geq 1} {U}(m)\sum_{  p \leq x \atop {p \nmid N}}a_f(p^{2m}).
\end{align*}
Part (a) is immediate since $U(m) = 0$ for $m > BL.$
To prove part (b), we use the Ramanujan-Deligne bound,
$$|a_f(p^{2m})| \leq 2m+1.$$
Thus, for any positive integer $M,$
$$\sum_{m > M} {U}(m)\sum_{  p \leq x \atop {p \nmid N}}a_f(p^{2m}) \ll \pi(x) \sum_{m >M} m|{U}(m)|.$$
In order to prove part (b), we observe that
\begin{equation*}
\begin{split}
\pi(x)\sum_{m>M}\frac{m/L}{e^{\lambda(m/L)^{\omega}}} &\ll \pi(x) \int_M^{\infty} \frac{y/L}{e^{\lambda(y/L)^{\omega}}}dy\\
&\ll \pi(x)L\frac{1}{\omega \lambda^{1/\omega}}\int_{e^{\lambda(M/L)^{\omega}}}^{\infty}\frac{(\log t)^{\frac{2}{\omega} - 1}}{t^2} dt\\
&\ll  \pi(x)L\frac{1}{\omega \lambda^{1/\omega}}\left({e^{\lambda(M/L)^{\omega}}}\right)^{-1 + \epsilon}.
\end{split}
\end{equation*}
Choosing $$M \asymp L\left(\frac{2020 \lambda}{1 - \epsilon} \log x\right)^{\frac{1}{\omega}},$$
we get
$$\pi(x)L\frac{1}{\omega \lambda^{1/\omega}}\left({e^{\lambda(M/L)^{\omega}}}\right)^{-1 + \epsilon} \ll x^{-2019}.$$
Equation \eqref{squares} can be obtained in a similar manner.
\end{proof}

After describing some preliminary tools in Sections \ref{identities} and \ref{Chebyshev}, we prove Theorem \ref{main-modular} in Section \ref{modular-proof}.
\subsection{Trace formula and estimates}\label{identities}
We will use the following trace formula repeatedly while computing the required moments for Theorem \ref{main-modular}. 
\begin{prop}\label{EStrace}
	Let $n$ be a positive integer coprime to $N.$  Then, 
	\begin{equation}\label{trace-1}
 \sum_{f \in \mathcal F_{N,k}} a_f(n) = \left(|\mathcal F_{N,k}|  + \O\left(\sqrt{N}\right)\right)\left(\begin{cases}
\frac{1}{\sqrt n}&\text{ if }n \text{ is a square},\\
0&\text{ otherwise}
\end{cases}\right)
+ \O\left(n\sigma_0(n)4^{\nu(N)}\right),
\end{equation}
where $\sigma_0(n)$ denotes the number of divisors of $n$ and $\nu(N)$ denotes the number of distinct prime divisors of $N.$
Thus,
\begin{equation}\label{trace-2}
\frac{1}{|\mathcal F_{N,k}|}\sum_{f \in \mathcal F_{N,k}} a_f(n) = \left(\begin{cases}
\frac{1}{\sqrt n}&\text{ if }n \text{ is a square},\\
0&\text{ otherwise}
\end{cases} \right)+ \O\left(n\sigma_0(n)\frac{4^{\nu(N)}}{k\sqrt N}\right).
\end{equation}

\end{prop}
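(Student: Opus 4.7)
Our plan is to derive \eqref{trace-1} from the Eichler-Selberg trace formula for the Hecke operator $T_n$ acting on $S(N,k)$, combined with Atkin-Lehner-Li theory to relate the trace on the full cusp form space to the sum of newform eigenvalues. With the normalization $T_n f = n^{(k-1)/2} a_f(n) f$ for $f \in \mathcal F_{N,k}$, we have
$$\sum_{f \in \mathcal F_{N,k}} a_f(n) = n^{-(k-1)/2} \operatorname{Tr}^{\mathrm{new}}\bigl(T_n \mid S(N,k)\bigr),$$
so it suffices to control this newform trace.

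The first step is to write out the Eichler-Selberg trace formula, valid for $(n,N)=1$, in the shape $\operatorname{Tr}(T_n \mid S(N,k)) = A_1 + A_2 + A_3 + A_4$, where $A_1$ is the identity contribution, $A_2$ the elliptic term, and $A_3, A_4$ the hyperbolic and parabolic corrections. The identity piece is nonzero only when $n$ is a perfect square, and in that case equals $\tfrac{k-1}{12}\psi(N)\, n^{k/2-1}$, with $\psi(N) = [\operatorname{SL}_2(\Z):\Gamma_0(N)]$. Dividing by $n^{(k-1)/2}$ yields $\tfrac{k-1}{12}\psi(N)\, n^{-1/2}$. Combining this with the standard dimension estimate $|\mathcal F_{N,k}| = \tfrac{k-1}{12}\psi(N) + \O(\sqrt N)$, which follows from the dimension formula for $S(N,k)$ together with the Atkin-Lehner-Li newform decomposition, produces exactly the main term of \eqref{trace-1}.

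Next I would bound the remaining terms. The elliptic term has the shape
$$A_2 = \tfrac{1}{2}\sum_{t^2 < 4n} P_k(t,n) \sum_{f^2 \mid t^2-4n} h_w\!\left(\tfrac{t^2-4n}{f^2}\right) \mu_N(t,f,n),$$
where $|P_k(t,n)| \le n^{(k-2)/2}$ on $|t| \le 2\sqrt n$, $h_w(D) \ll \sqrt{|D|}\log|D|$ is the weighted Hurwitz class number, and the local factor $\mu_N$, counting square-roots of $t^2-4n$ modulo primes dividing $N$, is bounded by $4^{\nu(N)}$. Summation over $t$ yields $|A_2| \ll n^{(k-1)/2}\cdot n\sigma_0(n)\cdot 4^{\nu(N)}$; the terms $A_3$ and $A_4$ are bounded analogously using $\sum_{d\mid n} d \ll n\sigma_0(n)$. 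After dividing by $n^{(k-1)/2}$ and assembling, these contributions give the claimed error $\O(n\sigma_0(n)\, 4^{\nu(N)})$ in \eqref{trace-1}. Equation \eqref{trace-2} follows by dividing \eqref{trace-1} through by $|\mathcal F_{N,k}|$ and using $|\mathcal F_{N,k}| \gtrsim k\sqrt N$ to absorb both error contributions into the stated bound.

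The main obstacle is the passage from the full-space trace $\operatorname{Tr}(T_n \mid S(N,k))$ to the newform trace. By the Atkin-Lehner-Li decomposition
$$S(N,k) = \bigoplus_{M \mid N}\, \bigoplus_{\ell \mid N/M}\, \iota_\ell\bigl(\mathcal F_{M,k}\bigr),$$
the newform sum is expressed as a Möbius-type alternating combination of full-space traces at levels $M \mid N$. Since $(n,N)=1$, the Hecke eigenvalues $a_f(n)$ are preserved under the embeddings $\iota_\ell$, so this inversion introduces only a combinatorial factor $\ll 2^{\nu(N)}$, comfortably absorbed into the $4^{\nu(N)}$ in the final error. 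The delicate bookkeeping lies in verifying that the bounds for $A_2, A_3, A_4$ survive this inversion uniformly in the level $M$; the individual estimates are by now standard and can be quoted from the literature.
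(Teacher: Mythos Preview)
Your approach is the same as the paper's: both rest on the Eichler--Selberg trace formula and newform sieving, with the paper simply citing \cite{MS2} for the details rather than writing them out. However, there is a concrete error in your identification of the main term. You write
\[
|\mathcal F_{N,k}| = \tfrac{k-1}{12}\psi(N) + \O(\sqrt N), \qquad \psi(N)=[\SL_2(\Z):\Gamma_0(N)]=N\prod_{p\mid N}\bigl(1+\tfrac{1}{p}\bigr),
\]
but this is the dimension of the \emph{full} space $S(N,k)$, not the number of newforms. Already for $N=p$ prime one has $|\mathcal F_{p,k}| \sim \tfrac{k-1}{12}(p-1)$, not $\tfrac{k-1}{12}(p+1)$. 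The correct asymptotic is $|\mathcal F_{N,k}| = \tfrac{k-1}{12}NB_1(N) + \O(\sqrt N)$ for a different multiplicative function $B_1$ (with $B_1(p)=1-1/p$ at primes), and it is precisely the M\"obius inversion you mention at the end that converts the identity term $\tfrac{k-1}{12}\psi(M)$ at each sublevel $M\mid N$ into $\tfrac{k-1}{12}NB_1(N)$ at level $N$.

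So the gap is one of ordering: you extract the main term \emph{before} performing the old/new inversion, and hence match it against the wrong arithmetic function. Once you carry out the inversion first, the identity contribution to the newform trace becomes $\tfrac{k-1}{12}NB_1(N)\,n^{-1/2}$, which then legitimately equals $(|\mathcal F_{N,k}| + \O(\sqrt N))\,n^{-1/2}$, and your bounds on the elliptic/hyperbolic/parabolic pieces (together with the $2^{\nu(N)}$ loss from the sieve) give \eqref{trace-1}. Your derivation of \eqref{trace-2} via $|\mathcal F_{N,k}| \gg k\sqrt N$ is fine; the paper uses the cruder $|\mathcal F_{N,k}| \gg Nk/4^{\nu(N)}$ to the same effect.
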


	%for any integer $m \geq 1,$
	%$$\sum_{f \in \mathcal F_{N,k}} a_f(p^{2m}) = \frac{|\mathcal F_{N,k}|}{p^m} + \O\left(\frac{\sqrt{N}}{p^m} + p^{4m+1} 4^{\nu(N)}\right).$$
\begin{proof}
	In order to prove this proposition, we draw upon the trace formula for Hecke operators acting on the subspace of primitive cusp forms in $S(N,k)$  and estimates for the terms of this trace formula.  These have been worked out in \cite{MS2}.
	
	By Remark 11 of \cite{MS2}, we have,
	\begin{equation}\label{dim}
	|\mathcal F_{N,k}| = NB_1(N)\frac{k-1}{12} + \O(\sqrt{N}),
	\end{equation}
where $B_1(N)$ is a multiplicative function such that for a prime power $q^r,$
$$B_1(q^r) = \begin{cases}
1-\frac{1}{q}&\text{ if } r=1,\\
1-\frac{1}{q}-\frac{1}{q^2}&\text{ if } r=2,\\
\left(1-\frac{1}{q}\right)\left(1-\frac{1}{q^2}\right)&\text{ if }r\geq 3.
\end{cases}$$
Moreover, following the trace formula and estimation of its terms in Section 3 of \cite{MS2} (see proof of \cite[Proposition 14]{MS2}), we have, for $n > 1$ and $(n,N) = 1$,
\begin{equation}\label{trace-new}
\sum_{f \in \mathcal F_{N,k}} a_f(n) = NB_1(N)\frac{k-1}{12}\left(\begin{cases}
\frac{1}{\sqrt n}&\text{ if }n \text{ is a square},\\
0&\text{ otherwise}
\end{cases}\right)
 + \O\left(n\sigma_0(n)4^{\nu(N)}\right).
\end{equation}
Combining equations \eqref{dim} and \eqref{trace-new}, we derive equation \eqref{trace-1}.
Now, we observe that $|\mathcal F_{N,k}| \asymp c NkB_1(N),$ for an absolute constant $c >0.$  By the formula for $B_1(N),$  we observe that
$$|\mathcal F_{N,k}| \asymp c NkB_1(N) \geq c \frac{Nk}{4^{\nu(N)}}.$$
Thus,
$$\frac{\sqrt{N} }{|\mathcal F_{N,k}|} \ll \frac{4^{\nu(N)}}{k\sqrt N}.$$
Hence,
\begin{equation*}
\begin{split} 
&\frac{1}{|\mathcal F_{N,k}|}\sum_{f \in \mathcal F_{N,k}} a_f(n) \\
&=\frac{1}{|\mathcal F_{N,k}|}\left(|\mathcal F_{N,k}|\begin{cases}
\frac{1}{\sqrt n}&\text{ if }n \text{ is a square},\\
0&\text{ otherwise}
\end{cases}\right) + \O\left(\frac{\sqrt{N}}{\sqrt{n}}\right) + \O\left(n\sigma_0(n) \frac{4^{\nu(N)}}{|\mathcal F_{N,k}|}\right)\\
&= \left(\begin{cases}
\frac{1}{\sqrt n}&\text{ if }n \text{ is a square},\\
0&\text{ otherwise}
\end{cases}\right) + \O\left(n\sigma_0(n)\frac{\sqrt{N} }{|\mathcal F_{N,k}|}\right)\\
&= \left(\begin{cases}
\frac{1}{\sqrt n}&\text{ if }n \text{ is a square},\\
0&\text{ otherwise}
\end{cases}\right) + \O\left(n\sigma_0(n)\frac{4^{\nu(N)}}{k\sqrt N}\right).
\end{split}
\end{equation*}
This proves equation \eqref{trace-2}.
\end{proof}

\subsection{Chebyshev polynomials}\label{Chebyshev}
For $x \in [-2,2],$ let us denote $x = 2\cos \pi t,$ for $t \in [0,1].$  For any integer $n \geq 0,$ the $n$-th Chebyshev polynomial of the second kind is defined as 
$$X_n(x) = \frac{\sin ((n+1)\pi t)}{\sin (\pi t)},\, x = 2\cos \pi t.$$
Thus, $X_0(x) = 1,\,X_1(x) = x,\,X_2(x) = x^2 - 1,\,X_3(x) = x^3 - 2x,$ and so on.
We now recall some classical properties of the Chebyshev polynomials which will be used in this article.
\begin{lemma}\label{key-lemma}
\begin{enumerate}
\item[{\bf (a)}] For any $m \geq n \geq 0,$
$$X_{n}(x)X_{m}(x) = \sum_{i=0}^{n}X_{m-n +2i}(x).$$
\item[{\bf (b)}] For continuous functions $F,\,G$ defined on $[0,1],$
define
$$\left \langle F(t), G(t) \right \rangle = \int_0^1 F(t) G(t) \mu(t) dt.$$
Then we have 
$$\left \langle X_n(2\cos \pi t), 1 \right \rangle =
\begin{cases}
0 &\text{ if }n>0\\
1 &\text{ if }n = 0.
\end{cases}$$

\item[{\bf (c)}] For any $n,m \geq 0,$
$$\left \langle X_n(2\cos \pi t),X_m(2\cos \pi t) \right \rangle =
\begin{cases}
1 &\text{ if }n = m\\
0 &\text{ if }n \neq m.
\end{cases}$$
\item[{\bf (d)}] We have $$\sum_{m=0}^{\infty} \frac{X_{2m}(2\cos \pi t)}{p^m} =  \frac{(p+1)}{(p^{\frac{1}{2}} + p^{-\frac{1}{2}})^2 -4\cos^2\pi t}.$$
\item[{\bf (e)}] For a prime $p,$ define 
$$ \mu_p(t) = \frac{(p+1)}{(p^{\frac{1}{2}} + p^{-\frac{1}{2}})^2 -4\cos^2\pi t}\mu(t).$$
Then,
$$\int_0^1X_n(2\cos \pi t) \mu_p(t) dt
= \begin{cases}
p^{-n/2} &\text{ if }n \text{ is even},\\
0 &\text{ if }n  \text{ is odd}.
\end{cases}$$
\end{enumerate}
\end{lemma}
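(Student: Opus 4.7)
The plan is to prove the five parts in order, with each part drawing on earlier ones. For \textbf{(a)}, I would set $\theta = \pi t$ and work with the trigonometric definition $X_k(2\cos\theta)\sin\theta = \sin((k+1)\theta)$. Multiplying the target sum by $\sin\theta$ and applying the product-to-sum identity
\begin{equation*}
2\sin((m-n+2i+1)\theta)\sin\theta = \cos((m-n+2i)\theta) - \cos((m-n+2i+2)\theta)
\end{equation*}
produces a telescoping sum over $i = 0,\ldots,n$ equal to $\cos((m-n)\theta) - \cos((m+n+2)\theta)$. Recognizing this difference as $2\sin((n+1)\theta)\sin((m+1)\theta)$ and dividing by $2\sin^2\theta$ gives the product formula.

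For \textbf{(b)}, using $\mu(t) = 2\sin^2\pi t$, the definition of $X_n$ reduces $\langle X_n,1\rangle$ to $\int_0^1[\cos(n\pi t) - \cos((n+2)\pi t)]\,dt$ via the same product-to-sum identity; this is $1$ when $n=0$ and $0$ for all positive integers $n$. Part \textbf{(c)} is then immediate: assuming without loss of generality $m \ge n$, I would write $\langle X_n,X_m\rangle = \langle X_nX_m, 1\rangle$, expand via (a), and observe that the index $m-n+2i$ equals zero only for $m=n$ and $i=0$, contributing $1$; every other term vanishes by (b).

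For \textbf{(d)}, the plan is to extract the even part of the standard generating function $\sum_{k\ge 0} X_k(x) z^k = (1-xz+z^2)^{-1}$ by averaging its evaluations at $z = p^{-1/2}$ and $z = -p^{-1/2}$. This yields
\begin{equation*}
\sum_{m\ge 0} \frac{X_{2m}(x)}{p^m} = \frac{1+p^{-1}}{(1+p^{-1})^2 - x^2/p},
\end{equation*}
which simplifies after multiplying numerator and denominator by $p$ and substituting $x = 2\cos\pi t$, noting that $p(1+p^{-1})^2 = (p^{1/2}+p^{-1/2})^2$.

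Finally, for \textbf{(e)}, substituting the identity from (d) into the definition of $\mu_p$ and interchanging sum and integral gives
\begin{equation*}
\int_0^1 X_n(2\cos\pi t)\mu_p(t)\,dt = \sum_{m\ge 0} p^{-m}\langle X_n, X_{2m}\rangle.
\end{equation*}
By (c), only the term with $2m = n$ survives, contributing $p^{-n/2}$ when $n$ is even and nothing when $n$ is odd. No step presents a serious conceptual obstacle; the main care is the telescoping bookkeeping in (a), where one must verify that the $n+1$ terms pair up symmetrically around $X_m$, and the algebraic manipulation in (d), where recognizing the symmetric averaging trick to isolate even-indexed coefficients is the key observation.
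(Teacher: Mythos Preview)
Your proposal is correct and self-contained: the telescoping argument for (a), the direct computation for (b), the reduction of (c) to (a) and (b), the even-part extraction from the generating function for (d), and the deduction of (e) from (c) and (d) all go through as you describe. The only minor slip is cosmetic---after recognizing $\cos((m-n)\theta)-\cos((m+n+2)\theta)=2\sin((n+1)\theta)\sin((m+1)\theta)$ you should divide by $\sin^2\theta$ rather than $2\sin^2\theta$, since the factor of $2$ has already been absorbed.

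As for comparison with the paper: the paper does not actually prove this lemma, but simply refers the reader to Sections 2.1 and 2.2 of Serre's article \cite{Serre}. Your write-up therefore supplies what the paper omits, and does so by the standard route one finds in Serre and elsewhere; there is no divergence in method to speak of.
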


\begin{proof}
We refer the interested reader to Sections 2.1 and 2.2 of \cite{Serre} for a detailed discussion of the above properties.
\end{proof}

\subsection{Proof of Theorem \ref{main-modular}}\label{modular-proof}
In order to prove Theorem \ref{main-modular}, we have to evaluate, for each $r \geq 1,$ the moments 
$$\lim_{x \to \infty}\frac{1}{|\mathcal F_{N,k}|}\sum_{f \in \mathcal F_{N,k}} \left(\frac{N_{\Phi,L,f}(x) - {\pi}(x)\int_0^1 \phi_L(t) \mu(t)dt}{\sqrt{{\pi}(x)V_{\Phi,L}}}\right)^r.$$
We observe that for sufficiently large values of $x,$
\begin{equation*}
\begin{split}
&N_{\Phi,L,f}(x) - {\pi}(x)\int_0^1 \phi_L(t) \mu(t)dt\\
&= N_{\Phi,L,f}(x) - {\pi}_N(x)\int_0^1 \phi_L(t) \mu(t)dt - \nu(N) \int_0^1 \phi_L(t).
\end{split}
\end{equation*}
Thus, by Proposition \ref{cut-off},
\begin{equation*}
\begin{split}
&N_{\Phi,L,f}(x) - {\pi}(x)\int_0^1 \phi_L(t) \mu(t)dt\\
&= \sum_{m \geq 1}{U}(m)\sum_{  p \leq x \atop { p \nmid N}}a_f(p^{2m}) - \nu(N) \int_0^1 \phi_L(t) + \O\left(x^{-2019}\right).
\end{split}
\end{equation*}
Note here that $M$ is chosen as per the choice of function $\Phi.$ 
The above equation tells us that for a fixed level $N,$
$$\lim_{x \to \infty}\left|\frac{N_{\Phi,L,f}(x) - {\pi}(x)\int_0^1 \phi_L(t) \mu(t)dt - \sum_{m \geq 1}{U}(m)\sum_{  p \leq x \atop { p \nmid N}}a_f(p^{2m})}{\sqrt{\pi(x) V_{\Phi,L}}}\right| = 0.$$
Therefore, computing the moments 
$$\lim_{x \to \infty}\frac{1}{|\mathcal F_{N,k}|}\sum_{f \in \mathcal F_{N,k}} \left(\frac{N_{\Phi,L,f}(x) - {\pi}(x)\int_0^1 \phi_L(t) \mu(t)dt}{\sqrt{{\pi}(x)V_{\Phi,L}}}\right)^r$$
is equivalent to computing 
\begin{equation}\label{finite-sum-moments}
\lim_{x \to \infty}\frac{1}{|\mathcal F_{N,k}|}\sum_{f \in \mathcal F_{N,k}} \left( \frac{\sum_{1 \leq m \leq M} {U}(m)\sum_{  p \leq x \atop {p \nmid N}} a_f(p^{2m})}{\sqrt{\pi(x)V_{\Phi,L}}}\right)^r
\end{equation}
with an appropriate choice of $M = M(x)$.
 %The advantage of Proposition \ref{cut-off} is that it approximates 
%$$N_{\Phi,L,f}(x) - \pi(x)  \int_0^1 \phi_L(t)\mu(t) dt$$
%with finite sums $$\sum\limits_{1\leq m\leq M} {U}(m)\sum_{  p \leq x}a_f(p^{2m})$$
%and
%$$\sum\limits_{1\leq m\leq M} {U}(m)\sum\limits_{\substack{x/2<p\le x \\ p\nmid \Delta(a,b)}} \tilde{a}_{E(a,b)}(p^{2m})$$
 % For these choices of $M = M(x),$ we determine, for every $r \geq 1$, %and
%\begin{equation}\label{finite-sum-moments-e}
%\lim_{x \to \infty} \frac{1}{4AB} \sum\limits_{|a|\le A} \sum\limits_{|b|\le B}  \left(\sum_{1 \leq m \leq M} {U}(m)\sum\limits_{\substack{x/2<p\le x \\ p\nmid \Delta(a,b)}} \tilde{a}_{E(a,b)}\left(p^{2m}\right)\right)^r.
%\end{equation}
Our goal is show that these match the Gaussian $r$-th moments under suitable conditions.

To this end, we prove the following theorem.
%on recalling that $$\frac{1}{L}\cdot U(m) = \frac{1}{L}\left(\widehat{\Phi}\left(\frac{m}{L}\right) -  \widehat{\Phi}\left(\frac{m+1}{L}\right)\right).$$ 
\begin{thm}\label{ModularForms-CLT}
	For any positive integer $r,$
	\begin{align*}
	\frac{1}{|\mathcal F_{N,k}|}\sum_{f \in \mathcal F_{N,k}} \left( \sum_{1 \leq m \leq M}{U}(m)\sum_{  p \leq x \atop {p \nmid N}} a_f(p^{2m})\right)^r &= {\pi(x)^{r/2}}\frac{r!}{2^{r}(r/2)!}\left(\sum_{m\geq 1}{U}(m)^2\right)^{r/2}\delta(r) + o(\pi(x)^{r/2}),
	\end{align*} where 
	$$
	\delta(r)= \begin{cases} 1 & \mbox{ if } r \mbox{ is even,}\\ 0 & \mbox{ if } r \mbox{ is odd.} \end{cases}
	$$
\end{thm}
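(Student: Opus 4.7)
The plan is to open the $r$-th power, group the resulting $r$-tuples of primes by their coincidence pattern (i.e., by a set partition of $\{1,\dots,r\}$), reduce each block's product of Hecke eigenvalues to a weighted sum of single Hecke eigenvalues via Lemma \ref{prodpolys}, average over $f\in\mathcal F_{N,k}$ using the trace formula in Proposition \ref{EStrace}, and isolate the main term that matches the Gaussian moment.

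First, expanding the $r$-th power gives
$$\sum_{m_1,\dots,m_r}\prod_i U(m_i)\sum_{p_1,\dots,p_r}\frac{1}{|\mathcal F_{N,k}|}\sum_{f\in\mathcal F_{N,k}}\prod_{i=1}^r a_f(p_i^{2m_i}).$$
For each set partition $\Pi=\{\mathcal S_1,\dots,\mathcal S_u\}$ of $\{1,\dots,r\}$, I would assign each block $\mathcal S_j$ a distinct prime $q_j\le x$ coprime to $N$ and write the inner product as $\prod_j\prod_{i\in\mathcal S_j}a_f(q_j^{2m_i})$. Applying Lemma \ref{prodpolys} within each block turns this into
$$\prod_j\sum_{\beta_j\ge 0}D\bigl((2m_i)_{i\in\mathcal S_j};2\beta_j\bigr)\,a_f(q_j^{2\beta_j}),$$
where the parity constraint forces only even indices to contribute. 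Hecke multiplicativity across distinct primes then combines the factors into $a_f(\prod_j q_j^{2\beta_j})$, and Proposition \ref{EStrace}, applied to this perfect square, produces a main piece $\prod_j q_j^{-\beta_j}$ plus a trace-formula remainder of size $O\bigl((\prod_jq_j^{2\beta_j})\,\sigma_0(\cdot)\,4^{\nu(N)}/(k\sqrt N)\bigr)$.

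The Gaussian main term arises exclusively from perfect matchings ($u=r/2$, every $|\mathcal S_j|=2$) with matched $m$-values in each pair and $\beta_j=0$ throughout; by Lemma \ref{prodpolys}, $D((2m,2m);0)=1$ for each pair. Each pair then supplies a factor $U(m^{(j)})^2$ and, after the sum over distinct primes $q_j$, a factor $\pi_N(x)(1+o(1))$. Summing over all such pairings of $\{1,\dots,r\}$ and invoking Proposition \ref{variance-exact} produces the claimed main term of order $\pi(x)^{r/2}\bigl(\sum_{m\ge 1}U(m)^2\bigr)^{r/2}$, which vanishes when $r$ is odd since no perfect matching exists.

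The remaining contributions must all be shown to be $o(\pi(x)^{r/2})$. I classify them as: (i) partitions $\Pi$ containing a block of size one or of size at least three; (ii) matchings with $\beta_j\ge 1$ in some block; (iii) the trace-formula remainder. For (i), any such block consumes indices without providing a new free prime summation, losing at least one factor of $\pi(x)$ compared to the main term; the remaining sums over $\vec m$ are controlled by combining the bounds $|D(m_1,\dots,m_s;m)|\ll \Sigma^{s-2}$ from Lemma \ref{prodpolys} with $|U(m)|\ll L^{-2}$ and the cutoff $m\le M$. For (ii), each $\beta_j\ge 1$ inserts a savings factor $1/q_j^{\beta_j}$ which converts the corresponding prime sum into $O(\log\log x)$, a power saving over $\pi(x)$. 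For (iii), the cumulative error is at most $O(x^{2rM+\varepsilon}/(k\sqrt N))$ after trivial bounds, and here the hypothesis on $k$ enters decisively: in case (a) Proposition \ref{cut-off} gives $M\asymp L$, so $\log k/\log x\to\infty$ is exactly what is needed, while in case (b) we have $M\asymp L(\log x)^{1/\omega}$ and the stronger condition $\log k/(\log x)^{1+1/\omega}\to\infty$ is precisely tailored to kill this error. The main obstacle is (i), where the partition combinatorics must be tracked uniformly; this is the modular-forms analogue of Lemma \ref{Cbounds} from the elliptic-curve section, and its careful execution is the principal technical step.
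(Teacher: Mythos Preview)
Your approach is correct and will go through, but it differs in packaging from the proof actually given in Section~\ref{Moments-modform}. You work directly with the $D$-coefficients of Lemma~\ref{prodpolys}, classifying partitions $\Pi$ of $\{1,\dots,r\}$ and controlling each contribution via the bounds on $D(\cdot;\cdot)$ together with $|U(m)|\ll L^{-2}$; this is precisely the strategy executed in Section~\ref{Moments-ellcurv} for elliptic curves (Lemma~\ref{Cbounds}) and in the original work~\cite{PrabhuSinha}. The paper instead adopts the simplification of~\cite{SWZ}: it packages the inner sum as $Z_M(t)=\sum_{1\le m\le M}U(m)X_{2m}(2\cos\pi t)$ and, via Proposition~\ref{Z_M product}, expresses the Hecke average $\frac{1}{|\mathcal F_{N,k}|}\sum_f\prod_i Z_M(\theta_f(p_i))^{r_i}$ as a product of integrals $\int_0^1 Z_M(t)^{r_i}\mu_{p_i}(t)\,dt$ against the $p$-adic Plancherel measures $\mu_{p_i}$ (Lemma~\ref{key-lemma}(e)), plus the trace-formula error. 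The case analysis is then on the composition $(r_1,\dots,r_u)$ of $r$ rather than on set partitions, with the main term coming from $(2,\dots,2)$, singletons $r_i=1$ handled by $\int Z_M\,\mu_{p_i}=\sum_m U(m)p_i^{-m}=O(1/p_i)$, and all other compositions bounded trivially by $Z_M\ll M^2$.

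The two routes are equivalent at the core: your $\beta_j=0$ main piece is the paper's $\int Z_M^2\,\mu=\sum U(m)^2$ via orthogonality, and your ``$\beta_j\ge 1$ gives $q_j^{-\beta_j}$'' is exactly Lemma~\ref{key-lemma}(e). The advantage of the paper's packaging is that the integral representation bypasses the combinatorics of Lemma~\ref{Cbounds} entirely, replacing it with a three-case analysis on integrals; your approach has the advantage of reusing the Section~\ref{Moments-ellcurv} machinery verbatim. One small remark on your case~(i): a singleton block \emph{does} carry a prime summation, but since $D(2m;\beta)=\delta_{\beta=2m}$ forces $\beta_j\ge 2$, that sum is $O(\log\log x)$ rather than $\pi(x)$, which is really your mechanism~(ii); the counting argument you need is that $z$ blocks with $\beta_j=0$ each have size $\ge 2$, forcing $z\le r/2$ with equality only for perfect matchings.
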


In order to prove Theorem \ref{ModularForms-CLT}, it would be natural to extend the techniques of \cite{PrabhuSinha}.  In fact, one of the key results in \cite{PrabhuSinha}, namely Theorem 7.5, is a special case of Theorem \ref{ModularForms-CLT} stated above, where $\Phi(t)$ is taken to be a classical function of Beurling \cite{Vaaler}.  In this case, the functions $\phi_{M}(t)$ turn out to be the Beurling-Selberg polynomials and $U(m)$ are exactly the coefficients $\widehat{S}^{\pm}(m)$ considered in \cite{PrabhuSinha}.  Recently, L. Sun, Y. Wen and X. Zhang \cite{SWZ} simplified the techniques of \cite{PrabhuSinha} and presented a much shorter proof of the result pertaining to the higher moments which is essential in proving Theorem \ref{moment}.  In this section, we adapt the techniques of \cite{SWZ} to prove Theorem \ref{ModularForms-CLT}.

We first recall that $a_f(p^{2m}) = X_{2m}(2\cos\pi \theta_f(p))$. 
Define \begin{align*}
Z_M(t) := \sum_{1 \leq m \leq M}{U}(m)X_{2m}(2\cos \pi t). 
\end{align*}
We note the trivial bound 
$$Z_M(\theta_f(p)) = \sum_{1 \leq m \leq M}{U}(m)X_{2m}(2\cos \pi \theta_f(p)) = \sum_{1 \leq m \leq M}{U}(m)a_f(p^{2m})$$ 
satisfies the trivial bound
\begin{equation}\label{Z_Mbound}
Z_M(\theta_f(p)) \ll \sum_{1 \leq m \leq M}m |U(m)| \ll M^2.
\end{equation}
We observe that
\begin{align}
&\frac{1}{|\mathcal F_{N,k}|}\sum_{f \in \mathcal F_{N,k}} \left( \sum_{1 \leq m \leq M}{U}(m)\sum_{  p \leq x \atop {p \nmid N}} a_f(p^{2m})\right)^r \\ \nonumber 
&= \frac{1}{|\mathcal F_{N,k}|}\sum_{f \in \mathcal F_{N,k}}\left(\sum_{  p \leq x \atop {p \nmid N}} Z_M(\theta_f(p))\right)^r \nonumber\\
&= \sum_{u=1}^{r}\sum_{(r_1,\ldots,r_u)}\frac{r!}{r_1!\cdots r_u!}\frac{1}{u!} \sum_{(p_1,\ldots,p_u)} \frac{1}{|\mathcal F_{N,k}|}\sum_{f \in \mathcal F_{N,k}}\prod_{i=1}^{u}Z_M(\theta_f(p_i))^{r_i},\label{multinomial-formula}
	\end{align}
	where
\begin{enumerate}
 \item the sum $ \sum\limits_{(r_1, r_2, \ldots,r_u)}$ is taken over tuples of positive integers 
 $r_1, r_2, \ldots,r_u$ so that\\ $r_1+ r_2+ \cdots + r_u = r,$ that is, over partitions of $r$ into $u$ positive parts and \\
 \item the sum $\sum\limits_{(p_1,p_2,\ldots,p_u)}$ is over $u$-tuples of distinct primes coprime to $N$ and not exceeding $x$.
 \end{enumerate}

We now give a proof of the following proposition, an analogue of which appears in \cite{SWZ}.
\begin{prop}\label{Z_M product}
Let $r_1, r_2, \ldots,r_u$ be positive integers so that\\ $r_1+ r_2+ \cdots + r_u = r$ and let $p_1,p_2,\ldots,p_u$ be $u$ distinct primes coprime to $N.$  Then,
	\begin{align}\label{Z_M power equation}
\frac{1}{|\mathcal{F}_{N,k}|} \sum_{f\in \mathcal F_{N,k}} \prod_{i=1}^{u} Z_M(\theta_f(p_i))^{r_i} & 
= \prod_{i=1}^{u} \int_0^{1} Z_M(t)^{r_i}\mu_{p_i}(t)dt + 
\O_r \left(\frac{4^{\nu(N)}M^{3r}}{k\sqrt{N}}\prod_{i=1}^{u}p_i^{2Mr_i}\right),
	\end{align}
	where $\mu_{p_i}(t)$ is as defined in Lemma \ref{key-lemma}(e).
	%\begin{equation}
	%\mu_{p_i}(\theta)d\theta := (p_i+1) \frac{2\sin^2(\pi \theta) d\theta}{\left(\sqrt{p_i} + \frac{1}{\sqrt{p_i}} \right)^2 -4\cos^2\pi\theta}\\
	%= \frac{(p_i+1)}{\left(\sqrt{p_i} + \frac{1}{\sqrt{p_i}} \right)^2 -4\cos^2\pi\theta} \mu(\theta) d\theta,
	%\end{equation} 
	%the $p$-adic Plancherel measure. 
\end{prop}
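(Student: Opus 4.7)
The plan is to reduce the left-hand side of \eqref{Z_M power equation} to a linear combination of averages of Hecke eigenvalues $a_f(n)$ over $\mathcal F_{N,k}$, to which the trace formula (Proposition \ref{EStrace}) applies, and then to recognize the main term via the integration identity in Lemma \ref{key-lemma}(e).

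First, I would expand each factor $Z_M(t)^{r_i}$ in the Chebyshev basis. Since $Z_M(t) = \sum_{1 \le m \le M} U(m) X_{2m}(2\cos \pi t)$, iterated application of the Chebyshev product rule in Lemma \ref{key-lemma}(a) (or equivalently Lemma \ref{prodpolys}) yields a finite expansion
\[
Z_M(t)^{r_i} = \sum_{0 \le n \le 2Mr_i} c_{i,n}\, X_n(2\cos \pi t),
\]
where the coefficients $c_{i,n}$ are explicit polynomial combinations of $U(m_1)\cdots U(m_{r_i})$ weighted by the nonnegative integers $D(m_1,\ldots,m_{r_i};n)$. Substituting $t = \theta_f(p_i)$ and using $X_n(2\cos \pi \theta_f(p_i)) = a_f(p_i^n)$, Hecke multiplicativity (available because $p_1,\ldots,p_u$ are distinct and coprime to $N$) lets me write
\[
\prod_{i=1}^u Z_M(\theta_f(p_i))^{r_i} = \sum_{\substack{0 \le n_i \le 2Mr_i\\ 1 \le i \le u}} \Bigl(\prod_{i=1}^u c_{i,n_i}\Bigr)\, a_f\!\left(p_1^{n_1} \cdots p_u^{n_u}\right).
\]
Averaging over $f$ by Proposition \ref{EStrace}, the main term of the trace formula is nonzero only when $p_1^{n_1}\cdots p_u^{n_u}$ is a perfect square, i.e.\ when every $n_i$ is even, and it then equals $\prod_i p_i^{-n_i/2}$. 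Collecting these contributions factors into a product over $i$, and Lemma \ref{key-lemma}(e) identifies the $i$-th factor as
\[
\sum_{0 \le n \le 2Mr_i} c_{i,n}\int_0^1 X_n(2\cos \pi t)\,\mu_{p_i}(t)\, dt = \int_0^1 Z_M(t)^{r_i}\,\mu_{p_i}(t)\, dt,
\]
which produces exactly the claimed main term.

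The main obstacle is controlling the error. The $O$-term of Proposition \ref{EStrace} contributes, for each tuple $(n_1,\ldots,n_u)$, at most $O(n\,\sigma_0(n)\,4^{\nu(N)}/(k\sqrt N))$ where $n = p_1^{n_1}\cdots p_u^{n_u} \le \prod_i p_i^{2Mr_i}$ and $\sigma_0(n) \le \prod_i (n_i+1) \ll_r M^u$. The number of tuples is $O_r(M^u)$, and using Lemma \ref{prodpolys} to bound $D(m_1,\ldots,m_{r_i};n) \ll_r (Mr_i)^{r_i-2}$ together with the trivial bound on $U(m)$, one gets $\prod_i|c_{i,n_i}| \ll_r M^{2r - 2u}$. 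Multiplying these factors against $\prod_i p_i^{2Mr_i}$ and $4^{\nu(N)}/(k\sqrt N)$ yields the stated error, with some slack. The delicate point is ensuring the bound on the coefficients $c_{i,n}$ is uniform over all compositions $(r_1,\ldots,r_u)$ of $r$, which is precisely what Lemma \ref{prodpolys} is tailored to provide.
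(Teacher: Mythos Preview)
Your proposal is correct and follows essentially the same route as the paper: expand each $Z_M(\theta_f(p_i))^{r_i}$ in the Chebyshev basis, use Hecke multiplicativity to reduce to a single eigenvalue $a_f(p_1^{n_1}\cdots p_u^{n_u})$, apply the trace formula (Proposition~\ref{EStrace}), and identify the main term via Lemma~\ref{key-lemma}(e). The only difference is bookkeeping in the error term: the paper bounds the Chebyshev coefficients $c_{i,n}=\langle Z_M^{r_i},X_n\rangle$ directly from the pointwise estimate $|Z_M(t)|\ll M^2$ in \eqref{Z_Mbound}, whereas you invoke the combinatorial bounds on $D(m_1,\ldots,m_{r_i};n)$ from Lemma~\ref{prodpolys}, which yields a slightly sharper power of $M$; either way the error is harmless for the application.
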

\begin{proof}
Expanding $Z_M(\theta_f(p_i))^{r_i}$ using the orthogonality relations among $\{X_n(2\cos\pi t)\}_{n\geq 0}$ given by Lemma \ref{key-lemma}(c), we have 
\begin{align*}
\prod_{i=1}^{u} Z_M(\theta_f(p_i))^{r_i} & = \prod_{i=1}^{u} \left( \sum_{m_i=0}^{Mr_i} \langle Z_M(t)^{r_i}, X_{2m_i}(2\cos \pi t) \rangle X_{2m_i}(2\cos\pi \theta_f(p_i)) \right)\\
&= \sum_{m_1=0}^{r_1}\cdots\sum_{m_u=0}^{r_u} \prod_{i=1}^{u} \langle Z_M(t)^{r_i}, X_{2m_i}(2\cos \pi t) \rangle a_f(p_i^{2m_i}).
\end{align*}
Using Lemma \ref{key-lemma}(e), Proposition \ref{EStrace} and the trivial bound in equation \eqref{Z_Mbound}, we have
\begin{align*}
&\frac{1}{|\mathcal{F}_{N,k}|} \sum_{f\in \mathcal F_{N,k}} \prod_{i=1}^{u} Z_M(\theta_f(p_i))^{r_i}\\ &=\sum_{m_1=0}^{Mr_1}\cdots\sum_{m_u=0}^{Mr_u} \prod_{i=1}^{u} \langle Z_M(t)^{r_i}, X_{2m_i}(2\cos \pi t) \rangle \frac{1}{|\mathcal{F}_{N,k}|} \sum_{f\in \mathcal F_{N,k}}  \prod_{i=1}^{u}a_f(p_i^{2m_i})\\
&= \sum_{m_1=0}^{Mr_1}\cdots\sum_{m_u=0}^{Mr_u} \prod_{i=1}^{u} \left\langle Z_M(t)^{r_i}, X_{2m_i}(2\cos \pi t) \right \rangle {p_i}^{-m_i} \\
& \qquad + \O\left(\frac{4^{\nu(N)}}{k\sqrt{N}} \sum_{m_1=0}^{Mr_1}\cdots\sum_{m_u=0}^{Mr_u} \prod_{i=1}^{u}\left|\left\langle Z_M(t)^{r_i}, X_{2m_i}(2\cos \pi t) \right\rangle\right| p_i^{2m_i}(2m_i)\right)\\
&= \sum_{m_1=0}^{Mr_1}\cdots\sum_{m_u=0}^{Mr_u} \prod_{i=1}^{u} \langle Z_M(t)^{r_i}, X_{2m_i}(2\cos \pi t) \rangle  \int_0^{1}X_{2m_i}(2\cos\pi t)\mu_{p_i}(t)dt \\
& \qquad + \O\left( \frac{4^{\nu(N)}}{k\sqrt{N}}\sum_{m_1=0}^{Mr_1}\cdots\sum_{m_u=0}^{Mr_u} \prod_{i=1}^{u}M^{2r_i}p_i^{2m_i}(2m_i)\right).
\end{align*}
We observe now that
\begin{align*}
&\sum_{m_1=0}^{Mr_1}\cdots\sum_{m_u=0}^{Mr_u} \prod_{i=1}^{u}M^{2r_i}p_i^{2m_i}(2m_i)\\
& \ll M^{2r}2^u\prod_{i=1}^{u}p_i^{2Mr_i}\sum_{m_1=0}^{Mr_1}\cdots\sum_{m_u=0}^{Mr_u}m_1m_2\cdots m_u\\
& \ll M^{2r} 2^u \prod_{i=1}^{u}p_i^{2Mr_i} \prod_{i=1}^{u}\left(Mr_i\right)^2\\
&\ll M^{2r}M^{2u} 2^u \prod_{i=1}^{u}r_i^2 p_i^{2Mr_i} \\
\end{align*}

Thus,
\begin{align*}
&\sum_{m_1=0}^{Mr_1}\cdots\sum_{m_u=0}^{Mr_u} \prod_{i=1}^{u} \langle Z_M(t)^{r_i}, X_{2m_i}(2\cos \pi t) \rangle  \int_0^{1}X_{2m_i}(2\cos\pi t)\mu_{p_i}(t)dt \\
& \qquad + \O\left( \frac{4^{\nu(N)}}{k\sqrt{N}}\sum_{m_1=0}^{Mr_1}\cdots\sum_{m_u=0}^{Mr_u} \prod_{i=1}^{u}M^{2r_i}p_i^{2m_i}(2m_i)\right)\\
&= \sum_{m_1=0}^{Mr_1}\cdots\sum_{m_u=0}^{Mr_u} \prod_{i=1}^{u} \langle Z_M(t)^{r_i}, X_{2m_i}(2\cos \pi t) \rangle  \int_0^{1}X_{2m_i}(2\cos\pi t)\mu_{p_i}(t)dt \\
& \quad + \O\left( \frac{4^{\nu(N)}}{k\sqrt{N}}M^{2r} 2^u M^{2u}\prod_{i=1}^u r_i^2 \prod_{i=1}^u r_i^2 p_i^{2Mr_i}\right)\\
&= \prod_{i=1}^{u} \int_{0}^{1} \sum_{m_i=0}^{Mr_i}\langle Z_M(t)^{r_i}, X_{m_i}(2\cos \pi t) \rangle  X_{2m_i}(2\cos\pi t)\mu_{p_i}(t)d\theta + \O \left(\frac{4^{\nu(N)}2^uM^{4r}}{k\sqrt{N}}\prod_{i=1}^{u}r_i^2p_i^{2Mr_i}\right)\\
&= \prod_{i=1}^u \int_{0}^{1}Z_M(t)^{r_i}\mu_{p_i}(t)dt +  \O_r \left(\frac{4^{\nu(N)}2^u M^{4r}}{k\sqrt{N}}\prod_{i=1}^{u}r_i^2p_i^{2Mr_i}\right).
\end{align*}
\end{proof}
We are now ready to prove Theorem \ref{ModularForms-CLT}.
\begin{proof}
Using Proposition \ref{Z_M product} in \eqref{multinomial-formula}, we observe that
\begin{align*}
&\frac{1}{|\mathcal F_{N,k}|}\sum_{f \in \mathcal F_{N,k}} \left( \sum_{1 \leq m \leq M}{U}(m)\sum_{  p \leq x \atop {p \nmid N}} a_f(p^{2m})\right)^r \\ 
&= \sum_{u=1}^{r}\sum_{(r_1,\ldots,r_u)}\frac{r!}{r_1!\cdots r_u!}\frac{1}{u!} \sum_{(p_1,\ldots,p_u)} \frac{1}{|\mathcal F_{N,k}|}\sum_{f \in \mathcal F_{N,k}}\prod_{i=1}^{u}Z_M(\theta_f(p_i))^{r_i}\\
&=  \sum_{u=1}^{r}\sum_{(r_1,\ldots,r_u)}\frac{r!}{r_1!\cdots r_u!}\frac{1}{u!} \sum_{(p_1,\ldots,p_u)}\prod_{i=1}^{u}\int_0^1 Z_M(t)^{r_i}\mu_{p_i}(t)dt \\
& \quad + \O\left(\sum_{u=1}^{r}\sum_{(r_1,\ldots,r_u)}\frac{r!}{r_1!\cdots r_u!}\frac{1}{u!} \sum_{(p_1,\ldots,p_u)} \frac{4^{\nu(N)}2^uM^{4r}}{k\sqrt{N}}\prod_{i=1}^{u}r_i^2 p_i^{2Mr_i}\right)\\
&= \sum_{u=1}^{r}\sum_{(r_1,\ldots,r_u)}\frac{r!}{r_1!\cdots r_u!}\frac{1}{u!} \sum_{(p_1,\ldots,p_u)}\prod_{i=1}^{u}\int_0^1 Z_M(t)^{r_i}\mu_{p_i}(t)dt\\
&  \quad + \O_r\left(\frac{4^{\nu(N)}}{k \sqrt N} M^{4r} x^{2Mr} \pi(x)^r\right).
\end{align*}
As in Theorem \ref{main-modular} and Proposition \ref{cut-off}, we now consider two cases.

\vspace{.25in}

{\bf Case (a):}  Suppose the Fourier transform $\widehat{\Phi}$ is compactly supported in the interval $[-B,B].$  Then, choose $M \asymp BL.$  The error term equals
$$\O_r\left(\frac{4^{\nu(N)}}{k \sqrt N} M^{4r} x^{2Mr} \pi(x)^r\right) =   \O_r\left(\frac{4^{\nu(N)}}{k \sqrt N}L^{4r}x^{3BLr}\right).$$
Suppose $k = k(x)$ satisfies $\frac{\log k}{\log x} \to \infty$ as $x \to \infty.$  Then,
$$x^{3BLr} = \o(k) \text{ as }x \to \infty$$
and therefore, the error term 
$$\O_r\left(\frac{4^{\nu(N)}}{k \sqrt N} M^{4r} x^{2Mr} \pi(x)^r\right) $$ goes to zero
as $x \to \infty.$

\vspace{.25in}

{\bf Case (b):}  Suppose $\widehat{\Phi}(t) \ll e^{-\lambda |t|^{\omega}}$ for some $\lambda,\omega >0$ as $|t| \to \infty.$  Then, as in Proposition \ref{cut-off}, choose
$$M \asymp L\left(\frac{2020 \lambda}{1 - \epsilon} \log x\right)^{\frac{1}{\omega}}.$$
The error term is
$$\O_r\left(\frac{4^{\nu(N)}}{k \sqrt N} M^{4r} x^{2Mr} \pi(x)^r\right) = \O_r\left(\frac{4^{\nu(N)}}{k \sqrt N} L^{4r}\left(\frac{2020 \lambda}{1 - \epsilon}\log x\right)^{4r/\omega} x^{4rL\left(\frac{2020\lambda}{1 - \epsilon} \log x\right)^{1/\omega}}\right).$$
%If $\frac{\log k}{L(\log x)^{1+1/\omega} \to \infty$ as $x\to\infty$, this contribution is negligible. 
If $\frac{\log k}{(\log x)^{1+1/\omega}} \to \infty$ as $x\to\infty$,
then, for any $r \geq 1,$ 
$$(\log x)^{4r/\omega}x^{4rL\left(\frac{2020\lambda}{1 - \epsilon} \log x\right)^{1/\omega}} = \o(k) \text{ as }x \to \infty.$$
Thus, the error term
$$\O_r\left(\frac{4^{\nu(N)}}{k \sqrt N} M^{4r} x^{2Mr} \pi(x)^r\right) $$
goes to zero as $x \to \infty.$

Therefore, in both cases,
$$\lim_{x \to \infty} \frac{1}{|\mathcal F_{N,k}|}\sum_{f \in \mathcal F_{N,k}} \left( \sum_{1 \leq m \leq M}{U}(m)\sum_{  p \leq x \atop {p \nmid N}} a_f(p^{2m})\right)^r $$
$$ = \lim_{x \to \infty}\sum_{u=1}^{r}\sum_{(r_1,\ldots,r_u)}\frac{r!}{r_1!\cdots r_u!}\frac{1}{u!} \sum_{(p_1,\ldots,p_u)}\prod_{i=1}^{u}\int_0^1 Z_M(t))^{r_i},\mu_{p_i}(t)dt.$$

Next, we estimate the contribution of the main term, by separating the partitions into three types.\\
{\bf Case 1:} If $(r_1,r_2,\ldots,r_u)= (2,2,\ldots,2)$, then we have
\begin{align*}
\prod_{i=1}^{u} \int_0^{\pi} Z_M(t)^{r_i}\mu_{p_i}(t)dt &= \prod_{i=1}^{u} \int_0^{\pi} Z_M(t)^{2}\mu_{p_i}(t)dt.
\end{align*} 
Using the product formula from Lemma \ref{key-lemma} (a), for positive integers $m,n$ with $m\geq n$, we have
%$$X_{m}(2\cos\pi t)X_n(2\cos\pi t)=\sum_{i=0}^{n} X_{m-n+2i}(2\cos\pi t).$$
%Thus,  
\begin{align}
Z_M(t)^2 = 2\sum_{{m,n=1}\atop{m> n}}^{M}{U}(m){U}(n)\sum_{j=0}^{n}X_{2(m-n+j)}(2\cos\pi t) + \sum_{m=1}^M{U}(m)^2\sum_{l=0}^{m}X_{2l}(2\cos\pi t).
\end{align}
Using $\mu(t)= \int_0^{1}2\sin^2\pi t dt$, it is not hard to see that
\begin{equation}\label{mu_p vs mu}
\mu_p(t) = \mu(t) +\O\left(\frac{1}{p}\right).
\end{equation} 
Thus, using equation \eqref{mu_p vs mu} and Lemma \ref{key-lemma}(b), we have  
\begin{align*}
\int_0^{1} Z_M(t)^{2}\mu_{p_i}(t)dt &= \int_0^{1} Z_M(t)^{2}\mu(t)dt + \O\left(\frac{M^2}{p_i}\right)\\
&= 2\sum_{{m,n=1}\atop{m> n}}^{M}{U}(m){U}(n)\sum_{j=0}^{n}\int_0^1 X_{2(m-n+j)}(2\cos\pi t)\mu(t) dt \\
& \quad + \sum_{m=1}^M{U}(m)^2\sum_{l=0}^{m}\int_0^1X_{2l}(2\cos\pi t) \mu(t) dt + \O\left(\frac{M^2}{p_i}\right)\\
&= \sum_{m=1}^M{U}(m)^2 + \O\left(\frac{M^2}{p_i}\right).\\
\end{align*}
By Proposition 3, depending on the choice of $\Phi$, either
$$\sum_{m=1}^{M}{U}(m)^2 = \sum_{m\geq 1}{U}(m)^2$$
 for $M \asymp BL$
 or
 $$\sum_{m=1}^{M}{U}(m)^2 = \sum_{m\geq 1}{U}(m)^2 + \O\left(x^{-2019}\right).$$
for 
$$M \asymp L\left(\frac{2020 \lambda}{1 - \epsilon} \log x\right)^{\frac{1}{\omega}}.$$
In any case, summing over prime tuples $(p_1,\ldots, p_{r/2})$, 
 we obtain the contribution to \eqref{multinomial-formula} from the partition $(r_1,r_2,\ldots,r_u)= (2,2,\ldots,2)$ to be 
 $${\pi(x)^{r/2}}\frac{r!}{2^{r}(r/2)!}\left(\sum_{m\geq 1}{U}(m)^2 + \O\left(x^{-2019}\right)\right)^{r/2} + \O_r\left(\sum_{(p_1,p_2,\dots, p_{r/2})}\prod_{i=1}^{r/2}\frac{M^2}{p_i}\right)$$
$$ =   {\pi(x)^{r/2}}\frac{r!}{2^{r}(r/2)!}\left(\sum_{m\geq 1}{U}(m)^2 \right)^{r/2} + o(\pi(x)^{r/2}).$$ 
\vspace{0.5cm}

{\bf Case 2:} If $r_i=1$ for some $1\leq i\leq u$, without loss of generality we may assume that for some $1 \leq l\leq u$, we have $r_i=1$ for $1\leq i\leq l$ and $r_j \geq 2$ for $j \geq l+1$.  In this case, we have 
\begin{align*}
&\prod_{i=1}^{u} \int_0^{1} Z_M(t)^{r_i}\mu_{p_i}(t)dt\\
&= \prod_{i=1}^{l} \left(\int_0^{1} Z_M(t)\mu_{p_i}(t)dt\right) \prod_{j=l+1}^{u} \int_0^{1} Z_M(t)^{r_j}\mu_{p_j}(t)dt \\
& \ll \prod_{i=1}^{l}\left(\sum_{1 \leq m \leq M}\frac{|{U}(m)|}{p_i^m}\right)\prod_{j=l+1}^{u}{M}^{2r_j},
\end{align*} using Lemma \ref{key-lemma}(e) for the first product and \eqref{Z_Mbound} for the second product. Thus, for such partitions $(r_1,\ldots, r_u)$, \begin{equation*}
\frac{r!}{r_1!\cdots r_u!}\frac{1}{u!} \sum_{(p_1,\ldots,p_u)}\prod_{i=1}^{u} \int_0^{1} Z_M(t)^{r_i}\mu_{p_i}(t)dt \ll_r \pi(x)^{u-l}\left(L(\log x)^{1/\omega}\right)^{2r}(\log\log x)^{l} = o(\pi(x)^{r/2}),
\end{equation*} by observing that for such partitions, $u-l$ is at most $(r-1)/2$. 
\vspace{0.5cm}

{\bf Case 3:} If $r_i\geq2$ for each $i$ and $r_i>2$ for at least one $1\leq i\leq u$. Therefore, $u<\frac{r}{2}$ and we use the trivial estimate in \eqref{Z_Mbound} to write 
\begin{align*}
\prod_{i=1}^{u} \int_0^{1} Z_M(t)^{r_i}\mu_{p_i}(t)d\theta &\ll \pi(x)^{r/2-1}(L(\log x)^{1/\omega})^{2r} =o(\pi(x)^{r/2}).
\end{align*}

Combining all the cases together, we have proved Theorem \ref{ModularForms-CLT}. 

\end{proof} 
By Proposition \ref{variance-exact},
$$\sum_{m\geq 1} {U}(m)^2 = V_{\Phi,L} = \int_0^1\phi_L(t)^2\mu(t)dt - \left( \int_0^1 \phi_L(t) \mu(t) dt \right)^2.$$

Thus, the limits in \eqref{finite-sum-moments} match the Gaussian moments with the appropriate growth conditions on $k = k(x)$ as per the choice of $\Phi$ specified in Theorem \ref{main-modular}.  This completes the proof of Theorem \ref{main-modular}.

\end{document}